\newtheorem{remark}{Remark} 
\newtheorem{definition}{Definition}
\newtheorem{proposition}{Proposition}
\newtheorem{theorem}{Theorem}
\newtheorem{corollary}{Corollary}
\newtheorem{lemma}{Lemma}
\title{ Duality Mapping for Schatten Matrix Norms\thanks{This work was funded by the Swiss National Science Foundation under  Grant 200020\_184646.}}
\author{Shayan Aziznejad, and Michael Unser  
\thanks{\'Ecole polytechnique f\'ed\'erale de Lausanne, Lausanne, Switzerland. E-mail: shayan.aziznejad@epfl.ch, michael.unser@epfl.ch}}
\begin{document}
\maketitle
 
\begin{abstract}
In this paper, we fully characterize the duality mapping over the space of matrices that are equipped with Schatten norms. Our approach is based on  the analysis   of the saturation of the H\"older inequality for Schatten norms. We prove in our main result that, for $p\in (1,\infty)$, the duality mapping over the space of real-valued matrices with Schatten-$p$ norm is a continuous and single-valued function and provide an explicit form for its computation. For the special case $p=1$, the mapping is set-valued; by adding a rank constraint, we show that it can be reduced to a Borel-measurable single-valued function for which we also provide a closed-form expression.
\end{abstract}

Keywords: Schatten norm, duality mapping, H\"older inequality, singular value decomposition, Banach spaces.

\section{Introduction}
\label{Sec:intro}
In linear algebra and matrix analysis, Schatten norms are a family of spectral  matrix norms that are defined via the singular-value decomposition \cite{bhatia2013matrix}. They have  appeared in many applications    such as image reconstruction \cite{lefki2013Poisson,lefki2013HS}, image denoising \cite{xie2016weighted}, and tensor decomposition \cite{gao2020robust}, to name a few.

Generally, the Schatten-$p$ norm of a matrix  is the $\ell_p$ norm of  its singular values. The family  contains some well-known matrix norms: The Frobenius and the spectral (operator) norms are special cases in the family, with $p=2$ and $p=\infty$, respectively.  The case $p=1$ (trace or nuclear norm) is of particular interest for applications as it can be used to recover low-rank matrices \cite{davenport2016overview}. This is the current paradigm in matrix completion, where  the goal is to recover an unknown matrix given some of its entries \cite{candes2009exact}.   Prominent examples of applications that can be reduced to low-rank matrix-recovery problems are phase retrieval \cite{candes2015phase}, sensor-array processing \cite{davies2012rank},  system identification \cite{fazel2013hankel}, and index coding \cite{asadi2017fast,esfahanizadeh2014matrix}. 

In addition to their many applications in data science, Schatten norms have been extensively studied from a theoretical point of view. Various inequalities concerning Schatten norms have been proven \cite{kittaneh1985inequalities,kittaneh1987inequalities2,kittaneh1986inequalities3,kittaneh1986inequalities4,kittaneh1987inequalities5,bourin2006matrix,Hirzallah2010Schatten,moslehian2011schatten,conde2016norm};
 sharp bounds for commutators  in Schatten spaces have been given \cite{wenzel2010impressions,cheng2015schatten}; moreover,  facial structure \cite{so1990facial}, Fr\'echet differentiablity \cite{potapov2014frechet}, and various other aspects \cite{kittaneh1989continuity,bhatia2000cartesian}  have been studied already. 
 
 Our objective in this paper is to investigate the duality mapping in spaces of matrices that are equipped with Schatten norms. The duality mapping is a powerful tool to understand the topological structure of Banach spaces \cite{beurling1962theorem,cioranescu2012geometry}. It has been used to derive powerful characterizations of the solution of variational problems in function spaces \cite{de1976best,unser2019unifying} and also to determine generalized linear inverse operators \cite{liu2007best}.  Here, we prove that the duality mapping over Schatten-$p$ spaces with $p\in (1,+\infty)$ is a single-valued and continuous function which, in fact, highlights the strict convexity of these spaces. For the special case $p=1$, the mapping is set-valued. However, we prove that, by adding a rank constraint, it reduces to a single-valued Borel-measurable function. In both cases, we also derive closed-form expressions that allow one to compute them explicitly.

The paper is organized as follows: In Section \ref{Sec:Prelim}, we present relevant mathematical tools and concepts that are used in this paper. We study the duality mapping of Schatten spaces and propose our main result in Section \ref{Sec:DualMap}. We provide further discussions regarding the introduced mappings in Section \ref{Sec:discuss}.
\section{Preliminaries}
\label{Sec:Prelim}
\subsection{Dual Norms, H\"older Inequality, and Duality Mapping}
\label{Sec:Holder}
Let $V$ be a finite-dimensional vector space that is equipped with an inner-product $\langle \cdot,\cdot \rangle:V \times V \rightarrow \mathbb{R}$ and let $\|\cdot\|_{X}:V\rightarrow \mathbb{R}_{\geq 0}$ be an arbitrary norm on $V$. We then denote by $X$  the space $V$ equipped with $\|\cdot\|_X$. Clearly, $X$ is a Banach space, because all finite-dimensional normed spaces are complete. The dual norm of $X$, denoted by $\|\cdot\|_{X'}: V\rightarrow \mathbb{R}_{\geq 0}$, is defined as 
\begin{equation}\label{Eq:DualNorm}
\|{\bf v}\|_{X'} = \sup_{{\bf u} \in V\backslash \{\boldsymbol{0}\} } \frac{\langle {\bf v} , {\bf u} \rangle}{\|{\bf u}\|_{X}},
\end{equation}
for any ${\bf v} \in V$. Following this definition, one would directly obtain the generic duality bound 
\begin{equation}\label{Eq:DualityBound}
\langle {\bf v}, {\bf u} \rangle  \leq \|{\bf v}\|_{X'} \|{\bf u}\|_{X},
\end{equation}
for any ${\bf v}, {\bf u} \in V$. Saturation of Inequality \eqref{Eq:DualityBound} is the key concept of dual conjugates that is formulated in the following definition.
\begin{definition}\label{Def:DualConj}
Let $V$ be a finite-dimensional vector space and let $(\|\cdot\|_X,\|\cdot\|_{X'})$ be a pair of dual norms that are defined over $V$. The pair $({\bf u},{\bf v}) \in V\times V$ is said to be a  $(X,X')$-conjugate, if 
\begin{itemize}
\item $\langle {\bf v}, {\bf u}\rangle = \|{\bf v}\|_{X'} \|{\bf u}\|_{X}$,
\item $\|{\bf v}\|_{X'} = \|{\bf u}\|_{X}$.
\end{itemize}
For any ${\bf u}\in V$, the set of all elements $ {\bf v} \in V$ such that $({\bf u},{\bf v})$ forms an  $(X,X')$-conjugate is denoted by $\mathcal{J}_{X}({\bf u})\subseteq V$. We refer to the set-valued mapping $\mathcal{J}_{X}: V \rightarrow 2^V$ as the duality mapping. If, for all ${\bf u}\in V$, the set  $\mathcal{J}_{X}({\bf u})$ is a singleton, then we indicate the duality mapping for the $X$-norm via the single-valued  function ${\rm J}_X:V\rightarrow V$ with $\mathcal{J}_{X}({\bf u}) = \{{\rm J}_X({\bf u})\}$. 
\end{definition}
It is worth mentioning that,  for any ${\bf u}\in V$, the set  $\mathcal{J}_{X}({\bf u})$  is nonempty .  In 
fact, the closed ball $B= \{ {\bf v}\in V : \|{\bf v} \|_{X'}= \|{\bf u} \|_{X} \}$ is   compact   and, hence,  the function ${\bf v}\mapsto  \langle {\bf v} , {\bf u}\rangle$ attains its maximum value at some  ${\bf v}^*\in B$.  Now, following Definition \ref{Def:DualConj}, one readily verifies that  $({\bf u},{\bf v}^*)$ is an  $({X},{X}')$-conjugate.

We conclude this part by providing a classical and illustrative example. Let $V= \mathbb{R}^n$ for some $n\in\mathbb{N}$. For any $p \in [1,+\infty]$, the $\ell_p$-norm of a vector ${\bf u} =(u_i)\in \mathbb{R}^n$ is defined as 
\begin{equation}\label{Eq:lp}
\|{\bf u}\|_{p} = \begin{cases} \left(\sum_{i=1}^n |u_i|^p\right)^{\frac{1}{p}}, & p< +\infty \\ \max_{i} |u_i|, & p= +\infty. \end{cases} 
\end{equation}
It is widely known that the dual norm of $\ell_p$ is the $\ell_q$-norm, where $(p,q)$ are H\"older conjugates ({\it i.e.},  $1/p+1/q=1$) \cite{rudin1991functional}. This stems from the H\"older inequality which states that 
\begin{equation}\label{Eq:HolderVec}
\langle {\bf v}, {\bf u} \rangle \leq \| {\bf u}\|_{p} \|{\bf v}\|_q,
\end{equation}
for all ${\bf u}=(u_i),{\bf v}=(v_i) \in \mathbb{R}^n$. In the sequel, we exclude the trivial cases   ${\bf u}=\boldsymbol{0}$ and ${\bf v}=\boldsymbol{0}$ to avoid unnecessary complexities in our statements.

When $1<p<+\infty$, Inequality \eqref{Eq:HolderVec} is saturated if and only if $u_i v_i \geq 0$ for $i=1,\ldots,n$ and  there exists a constant $c >0$ such that $|{\bf u}|^p = c |{\bf v}|^q$, where $|{\bf u}|^p = (|u_i|^p)$.  This ensures that the duality mapping is single-valued  and also yields the map 
\begin{equation}\label{Eq:DualMapLp}
\mathrm{J}_p({\bf u})= {\rm sign}({\bf u}) \frac{|{\bf u}|^{p-1}}{\|{\bf u}\|_p^{p-2}}. 
\end{equation}
 For $p=1$, one can verify that the equality happens if and only if,    for any index $i=1,\ldots,n$ with $u_i \neq 0$, one has that
\begin{equation}\label{Eq:HolderEq1}
v_i =   {\rm sign} (u_i)\|{\bf v}\|_{\infty}.
\end{equation}
In other words, the vector ${\bf v}$ should attain its extreme values at places where ${\bf u}$ has nonzero values, with the sign being determined by the corresponding element in ${\bf u}$. 

Due to \eqref{Eq:HolderEq1}, the set $\mathcal{J}_1({\bf u})$ is not necessarily a singleton. However,  if we add an additional sparsity constraint, then the mapping becomes single-valued. This leads us to introduce the new notion of {\it sparse duality mapping} in Definition \ref{Def:SparseDual}.
\begin{definition}\label{Def:SparseDual}
Let $V$ be a finite-dimensional vector space and let $s_0: V \rightarrow \mathbb{N}$ be an integer-valued function that acts as a sparsity measure.  Assuming a pair $(\|\cdot\|_X,\|\cdot\|_{X'})$ of dual norms over $V$, we call the pair $({\bf u},{\bf v})\in V\times V$  a sparse $(X,X')$-conjugate  if 
\begin{itemize}
\item $({\bf u},{\bf v})$ forms an $(X,X')$-conjugate pair. In other words, ${\bf v} \in \mathcal{J}({\bf u})$.
\item The quantity $s_0 ( {\bf v})$ attains its minimal value over the  set   $\mathcal{J}({\bf u})$.
\end{itemize}
We denote the set of sparse conjugates of ${\bf u}$ by $\mathcal{J}_{X,s_0}({\bf u})$. Whenever $\mathcal{J}_{X,s_0}({\bf u})$ is a singleton for any ${\bf u}\in V$, we refer to the single-valued function ${\rm J}_{X,s_0}: V \rightarrow V$ with $\mathcal{J}_{X,s_0}({\bf u})=\{{\rm J}_{X,s_0}({\bf u})\}$ as the sparse duality mapping. 
\end{definition}
Following Definition \ref{Def:SparseDual}, if we use the $\ell_0$-norm as the sparsity measure, that is $s_0 ({\bf u}) = \|{\bf u}\|_0={\rm Card}\left(\{i: u_i\neq 0\}\right)$\footnote{Although this functional  does not satisfy the homogeneity property of a norm, it has been widely referred to as the $\ell_0$-norm.}, then we have the sparse duality mapping 
\begin{align}\label{Eq:DualMapSparseL1}
&{\rm J}_{1,0}:\mathbb{R}^n\rightarrow\mathbb{R}^n:{\bf u}=(u_i) \mapsto {\bf v}=(v_i) ={\rm J}_{1,0}({\bf u}), \nonumber \\
&v_i = \begin{cases} {\rm sign}(u_i) \|{\bf u}\|_1, & u_i\neq 0 \\
0, & u_i=0.\end{cases} 
\end{align}
Finally, we mention that, for $p= +\infty$,  the  reduced set $\mathcal{J}_{\infty,0}$ is  not single-valued. Indeed, let us define $I_{\max}({\bf u})=\{i: |u_i|=\|{\bf u}\|_{\infty}\}\subseteq \{1,\ldots,n\}$. We readily deduce from \eqref{Eq:HolderEq1} that ${\bf v}=(v_1,\ldots,v_n) \in  \mathcal{J}_{\infty}({\bf u})$ if and only if $v_i = 0$ whenever $i\not \in I_{\max}({\bf u})$ and ${\rm 
sign}(v_i) = {\rm sign}(u_i)$ for $i \in I_{\max}({\bf u})$ with $\sum_{i\in I_{\max}({\bf u})} |
v_i|= \|{\bf u}\|_{\infty}$. This shows that $\mathcal{J}_{\infty}({\bf u})$ is a convex set with $\mathcal{J}_{\infty,0}({\bf u})$ being its extreme points, where  $\mathcal{J}_{\infty,0}({\bf u})=\{u_i {\bf e}_i: i\in  I_{\max}({\bf u})\}$.
\subsection{Schatten $p$-Norm}
It is widely known that any  matrix ${\bf A} \in \mathbb{R}^{m\times n}$  can be decomposed as 
\begin{equation}\label{Eq:SVD}
{\bf A}= {\bf U} {\bf S} {\bf V}^T,
\end{equation}
where ${\bf U}\in\mathbb{R}^{m\times m}$  and ${\bf V}\in\mathbb{R}^{n\times n}$ are orthogonal matrices and ${\bf S}$ is an $m$ by $n$ rectangular diagonal matrix with nonnegative real entries $\sigma_1 \geq \sigma_2 \geq \cdots \geq \sigma_{\min(m,n)} \geq 0$ sorted in descending order. In the literature, \eqref{Eq:SVD} is known as the singular-value decomposition (SVD)   and the entries $\sigma_i$  are the singular values of ${\bf A}$. In general, the SVD of a matrix $\bf A$ is not unique.  However, the diagonal matrix ${\bf S}$ and, consequently, its entries, are fully  determined  from ${\bf A}$. In other words, the values of $\sigma_i$ are invariant to a specific choice of decomposition. This is why one can refer to the diagonal entries of ${\bf S}$ as the ``singular values'' of ${\bf A}$.

 When ${\bf A}$ is not full rank, one can obtain a reduced version of \eqref{Eq:SVD}. Indeed, if we denote the rank of ${\bf A}$ by $r$, then we have that 
\begin{equation}\label{Eq:ReducedSVD}
{\bf A} = {\bf U}_r {\bf S}_r {\bf V}_r^T,
\end{equation}
where $ {\bf U}_r \in \mathbb{R}^{m\times r}$ and  $ {\bf V}_r \in \mathbb{R}^{n\times r}$   are (sub)-orthogonal matrices such that $ {\bf U}_r^T  {\bf U}_r =  {\bf V}_r^T {\bf V}_r =  {\bf I}_r$ and ${\bf S}_r ={\rm diag}(\boldsymbol{\sigma})$ is a diagonal matrix that contains positive singular values $\boldsymbol{\sigma}=(\sigma_1,\ldots,\sigma_r)\in \mathbb{R}^{r}$ of ${\bf A}$.  

Finally, for any  $p\in [1,+\infty]$, the Schatten-$p$ norm of ${\bf A}$ is defined as 
\begin{equation} \label{Eq:SchattenNorm} 
\|{\bf A}\|_{S_p} = \begin{cases} \left(\sum_{i=1}^r \sigma_i^p\right)^{\frac{1}{p}}, & p< +\infty \\ \sigma_1, & p= +\infty. \end{cases}  
\end{equation}
 
\section{Duality Mapping in Schatten Spaces}
\label{Sec:DualMap}
The dual of the Schatten-$p$ norm is the Schatten-$q$ norm, where $q\in [1,\infty]$ is such that  $\frac{1}{p}+\frac{1}{q}=1$ \cite{bhatia2013matrix}.  This is due to the generalized version of H\"older's inequality for Schatten norms, as stated in Proposition \ref{Prop:Holder}. While this is a known result (see, for example, \cite{Lefki2015StructureTensor}), it is also the basis for the present work, which is the reason why we provide a proof in  \ref{App:Holder}.
\begin{proposition}\label{Prop:Holder}
For any pair $(p,q)\in [1,+\infty]^2$ of H\"older conjugates with $\frac{1}{p}+\frac{1}{q}=1$  and any pair of  matrices ${\bf A},{\bf B}\in \mathbb{R}^{m\times n}$, we have that 
\begin{equation}\label{Eq:Holder}
\langle {\bf A}, {\bf B}\rangle = \mathrm{Tr}\left( {\bf A}^T {\bf B} \right) \leq \|{\bf A}\|_{S_p} \|{\bf B}\|_{S_q}. 
\end{equation}
\end{proposition}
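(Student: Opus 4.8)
The plan is to factor the matrix inequality through the two vectors of singular values and reduce it to the scalar Hölder inequality \eqref{Eq:HolderVec}, using the classical von~Neumann trace inequality as the bridge. Concretely, writing $k=\min(m,n)$ and listing the singular values of each matrix in decreasing order, the key intermediate estimate is
\[
\langle {\bf A},{\bf B}\rangle = \mathrm{Tr}\bigl({\bf A}^T{\bf B}\bigr) \;\leq\; \sum_{i=1}^{k} \sigma_i({\bf A})\,\sigma_i({\bf B}).
\]
Granting this, I would apply \eqref{Eq:HolderVec} to the nonnegative vectors $\bigl(\sigma_i({\bf A})\bigr)_{i=1}^{k}$ and $\bigl(\sigma_i({\bf B})\bigr)_{i=1}^{k}$ in $\mathbb{R}^{k}$ to obtain $\sum_{i}\sigma_i({\bf A})\,\sigma_i({\bf B})\leq \|{\bf A}\|_{S_p}\|{\bf B}\|_{S_q}$, the last step being just the definition \eqref{Eq:SchattenNorm}. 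Since the bound is trivial when $\mathrm{Tr}({\bf A}^T{\bf B})\leq 0$, all the substance is in the trace inequality.

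To prove the trace inequality I would first pad ${\bf A}$ and ${\bf B}$ with zero rows or columns so that both become square of size $N=\max(m,n)$; this alters neither $\mathrm{Tr}({\bf A}^T{\bf B})=\sum_{i,j}A_{ij}B_{ij}$ nor the nonzero singular values. Inserting the decompositions ${\bf A}={\bf U}_A{\bf S}_A{\bf V}_A^T$ and ${\bf B}={\bf U}_B{\bf S}_B{\bf V}_B^T$ from \eqref{Eq:SVD} and using the cyclic invariance of the trace gives
\[
\mathrm{Tr}\bigl({\bf A}^T{\bf B}\bigr) = \mathrm{Tr}\bigl({\bf S}_A\,{\bf M}\,{\bf S}_B\,{\bf N}\bigr) = \sum_{i,j} \sigma_i({\bf A})\,\sigma_j({\bf B})\, c_{ij},
\]
where ${\bf M}={\bf U}_A^T{\bf U}_B$ and ${\bf N}={\bf V}_B^T{\bf V}_A$ are orthogonal and $c_{ij}=M_{ij}N_{ji}$. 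Because each row of ${\bf M}$ and each column of ${\bf N}$ is a unit vector, the Cauchy--Schwarz inequality yields $\sum_j|c_{ij}|\leq 1$ and $\sum_i|c_{ij}|\leq 1$, so the nonnegative matrix $\bigl(|c_{ij}|\bigr)$ is doubly substochastic.

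It then remains to prove a scalar majorization lemma: for $\alpha_1\geq\cdots\geq\alpha_N\geq 0$, $\beta_1\geq\cdots\geq\beta_N\geq 0$ and any doubly substochastic $(d_{ij})$ with $d_{ij}\geq 0$, one has $\sum_{i,j}\alpha_i\beta_j d_{ij}\leq\sum_i\alpha_i\beta_i$. I would establish this by Abel summation: setting $a_k=\alpha_k-\alpha_{k+1}\geq 0$ and $b_l=\beta_l-\beta_{l+1}\geq 0$ (with $\alpha_{N+1}=\beta_{N+1}=0$) and $D_{kl}=\sum_{i\leq k,\,j\leq l}d_{ij}$, the sum rewrites as $\sum_{k,l}a_k b_l D_{kl}$; double substochasticity forces $D_{kl}\leq\min(k,l)$, which is exactly the value attained when $(d_{ij})$ is the identity, so nonnegativity of the increments $a_k,b_l$ makes the diagonal the maximizer. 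Applying the lemma with $\alpha_i=\sigma_i({\bf A})$, $\beta_j=\sigma_j({\bf B})$ and $d_{ij}=|c_{ij}|$ completes the trace inequality and hence the proposition. The main obstacle is this lemma, and within it the two points needing genuine care are the block-sum bound $D_{kl}\leq\min(k,l)$ and the fact that substochasticity (not merely stochasticity) still suffices; the SVD reduction and the cyclic-trace manipulation are routine bookkeeping.
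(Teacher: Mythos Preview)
Your proof is correct. Both you and the paper expand $\mathrm{Tr}({\bf A}^T{\bf B})$ via the SVDs into a double sum $\sum_{i,j}\sigma_i\tilde\sigma_j\,c_{ij}$ and use Cauchy--Schwarz together with orthonormality to show that $(|c_{ij}|)$ has row and column sums bounded by~$1$. The routes then diverge. You use this substochasticity plus an Abel-summation majorization argument to first establish the von~Neumann trace inequality $\sum_{i,j}\sigma_i\tilde\sigma_j|c_{ij}|\leq\sum_i\sigma_i\tilde\sigma_i$, and only afterwards apply the scalar H\"older inequality \eqref{Eq:HolderVec} to the vectors of singular values. The paper instead applies a \emph{weighted} H\"older inequality, with weights $|c_{ij}|$, directly to the double sum, obtaining
\[
\sum_{i,j}\sigma_i\tilde\sigma_j|c_{ij}|\;\leq\;\Bigl(\sum_i\sigma_i^{\,p}\sum_j|c_{ij}|\Bigr)^{1/p}\Bigl(\sum_j\tilde\sigma_j^{\,q}\sum_i|c_{ij}|\Bigr)^{1/q},
\]
after which the row/column bounds finish the estimate immediately without ever isolating the trace inequality. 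Your two-step argument has the advantage of producing the von~Neumann inequality as a standalone by-product; the paper's one-step weighted-H\"older argument is more compact and---more importantly for the paper's aims---its equality conditions (namely the saturation conditions of the weighted H\"older and of the Cauchy--Schwarz bounds \eqref{Eq:CauchyJ}--\eqref{Eq:CauchyI}) feed directly into the saturation analysis of Proposition~\ref{Prop:HolderSat}, which is the real payoff.
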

In Proposition \ref{Prop:HolderSat}, we investigate the case where the H\"older inequality is saturated, in the sense that 
\begin{equation}\label{Eq:HolderSat}
{\rm Tr}\left( {\bf A}^T {\bf B} \right) = \|{\bf A}\|_{S_p} \|{\bf B}\|_{S_q}.
\end{equation}
This saturation is central to our work, as it is tightly linked to the notion of duality mapping.
\begin{proposition}\label{Prop:HolderSat}
Let $(p,q)$ be a pair of H\"older conjugates and let ${\bf A},{\bf B}\in \mathbb{R}^{m\times n}$ be a pair of nonzero matrices with reduced SVDs of the form 
\begin{equation}\label{Eq:RedSVD}
{\bf A}= {\bf U}_r {\rm diag}(\boldsymbol{\sigma}) {\bf V}_r^T,  \quad {\bf B}= \tilde{\bf U}_{\tilde{r}} {\rm diag}(\boldsymbol{\tilde{\sigma}})\tilde{\bf V}_{\tilde{r}}^T.
\end{equation}
\begin{itemize}
\item If $p\in (1,\infty)$, then the H\"older inequality is saturated if and only if we have that 
\begin{equation}\label{Eq:FormBgenP}
{\bf B} = c {\bf U}_r{\rm diag}({\rm J}_p(\boldsymbol{\sigma})){\bf V}_r^T
\end{equation}
or, equivalently, 
\begin{equation}\label{Eq:FormAgenP}
{\bf A} = c^{-1} {\bf \tilde{U}}_{\tilde{r}}{\rm diag}({\rm J}_q(\tilde{\boldsymbol{\sigma}})){\bf \tilde{V}}_{\tilde{r}}^T,
\end{equation}
where $c= \frac{\|{\bf B}\|_{S_q}}{\|{\bf A}\|_{S_p}}$ and ${\rm J}_p(\cdot)$ and ${\rm J}_q(\cdot)$ are the duality mappings for  the $\ell_p$ and $\ell_q$ norms, respectively (see \eqref{Eq:DualMapLp}). 
\item If $p=1$, then a necessary condition for the saturation of the H\"older inequality is that 
\begin{equation}
{\rm rank}({\bf A}) \leq r_1 \leq {\rm rank}({\bf B}),
\end{equation}
where $r_1= {\rm Card}\left(\{i: \tilde{\sigma}_i = \tilde{\sigma}_1\}\right)$ is  the multiplicity of the first singular value of ${\bf B}$. Moreover, if we denote the first $r_1$ singular vectors of ${\bf B}$ in \eqref{Eq:RedSVD} by ${\bf \tilde{U}}_1\in \mathbb{R}^{m\times r_1}$ and ${\bf \tilde{V}}_1\in\mathbb{R}^{n\times r_1}$, then the saturation of the H\"older inequality is equivalent to the existence of a symmetric matrix ${\bf X}\in \mathbb{R}^{r_1\times r_1}$ such that
\begin{align}\label{Eq:FormA1}
{\bf A}  = {\bf \tilde{U}}_1 {\bf X} {\bf \tilde{V}}_1^T.
\end{align}
Finally in the rank-equality case  ${\rm rank}({\bf A}) ={\rm rank}({\bf B})$, we have saturation if and only if 
\begin{equation}\label{Eq:FormBrankEq}
{\bf B} = c {\bf U}_r{\bf V}_r^T,
\end{equation}
where $c= \|{\bf B}\|_{S_{\infty}}$ and the matrices ${\bf U}_r$ and ${\bf V}_r$ are defined in \eqref{Eq:RedSVD}. 
\end{itemize}
\end{proposition}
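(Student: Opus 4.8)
The plan is to exploit the two-step structure underlying Proposition~\ref{Prop:Holder}: the Schatten bound \eqref{Eq:Holder} is obtained by chaining von Neumann's trace inequality $\mathrm{Tr}(\mathbf{A}^T\mathbf{B})\le\sum_i\sigma_i\tilde\sigma_i$ with the vector H\"older inequality $\sum_i\sigma_i\tilde\sigma_i\le\|\boldsymbol{\sigma}\|_p\|\tilde{\boldsymbol{\sigma}}\|_q$ applied to the singular-value sequences. Consequently, the identity \eqref{Eq:HolderSat} holds if and only if both of these intermediate inequalities saturate at once. The equality case of von Neumann's inequality forces $\mathbf{A}$ and $\mathbf{B}$ to share a common singular system, i.e.\ there are orthogonal $\mathbf{U},\mathbf{V}$ with $\mathbf{A}=\mathbf{U}\,\mathrm{diag}(\boldsymbol{\sigma})\,\mathbf{V}^T$ and $\mathbf{B}=\mathbf{U}\,\mathrm{diag}(\tilde{\boldsymbol{\sigma}})\,\mathbf{V}^T$, both diagonal factors ordered decreasingly, whereas the second equality is read off from the vector analysis \eqref{Eq:DualMapLp}--\eqref{Eq:HolderEq1} applied to the nonnegative sequences $\boldsymbol{\sigma},\tilde{\boldsymbol{\sigma}}$.

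For $p\in(1,\infty)$, the strict convexity of $\ell_p$ makes the vector saturation rigid: it forces $\tilde{\boldsymbol{\sigma}}=c\,\mathrm{J}_p(\boldsymbol{\sigma})$ with $c=\|\mathbf{B}\|_{S_q}/\|\mathbf{A}\|_{S_p}$, and since $\mathrm{J}_p$ is strictly increasing on $\mathbb{R}_{\ge 0}$ it preserves both the descending order and the multiplicities of $\boldsymbol{\sigma}$. Substituting this into the common decomposition $\mathbf{B}=\mathbf{U}\,\mathrm{diag}(\tilde{\boldsymbol{\sigma}})\,\mathbf{V}^T$ and restricting to the range of the positive singular values yields \eqref{Eq:FormBgenP}; the form \eqref{Eq:FormAgenP} is the same assertion with the roles of $(\mathbf{A},p)$ and $(\mathbf{B},q)$ exchanged. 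The converse is a direct trace computation confirming that \eqref{Eq:FormBgenP} saturates \eqref{Eq:Holder}.

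For $p=1$ (hence $q=\infty$), the vector condition \eqref{Eq:HolderEq1} becomes $\sigma_i>0\Rightarrow\tilde\sigma_i=\tilde\sigma_1$; since $\tilde{\boldsymbol{\sigma}}$ is sorted, this confines the nonzero singular values of $\mathbf{A}$ to the first $r_1$ indices, which gives $\mathrm{rank}(\mathbf{A})\le r_1\le\mathrm{rank}(\mathbf{B})$. The common-system property then forces the column and row spaces of $\mathbf{A}$ into the spans of $\tilde{\mathbf{U}}_1$ and $\tilde{\mathbf{V}}_1$, so that $\mathbf{A}=\tilde{\mathbf{U}}_1\mathbf{X}\tilde{\mathbf{V}}_1^T$ with $\mathbf{X}=\tilde{\mathbf{U}}_1^T\mathbf{A}\tilde{\mathbf{V}}_1$. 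Using $\tilde{\mathbf{U}}_1^T\mathbf{B}=\tilde\sigma_1\tilde{\mathbf{V}}_1^T$ one computes $\mathrm{Tr}(\mathbf{A}^T\mathbf{B})=\tilde\sigma_1\,\mathrm{Tr}(\mathbf{X})$ while $\|\mathbf{A}\|_{S_1}=\|\mathbf{X}\|_{S_1}$, so saturation reduces to the scalar identity $\mathrm{Tr}(\mathbf{X})=\|\mathbf{X}\|_{S_1}$. By Weyl's majorization $\sum_i|\lambda_i(\mathbf{X})|\le\sum_i\sigma_i(\mathbf{X})$ together with $\mathrm{Tr}(\mathbf{X})=\sum_i\lambda_i(\mathbf{X})$, this identity forces $\mathbf{X}$ to be symmetric, and in fact positive semidefinite, which is the content of \eqref{Eq:FormA1}. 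In the rank-equality case the sandwich collapses to $\mathrm{rank}(\mathbf{A})=r_1=\mathrm{rank}(\mathbf{B})$, so every positive singular value of $\mathbf{B}$ equals $\tilde\sigma_1$; feeding the shared singular vectors of $\mathbf{A}$ into this gives $\mathbf{B}=\tilde\sigma_1\mathbf{U}_r\mathbf{V}_r^T$, which is \eqref{Eq:FormBrankEq}.

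The step I expect to be the main obstacle is the bookkeeping around degenerate (repeated) singular values. The equality analysis of von Neumann's inequality is delicate precisely when singular values coincide, because the singular vectors are then determined only up to block rotations; I must check that each claimed form is invariant under this gauge freedom (for instance, that the product $\mathbf{U}_r\mathbf{V}_r^T$ in \eqref{Eq:FormBrankEq} is independent of the chosen reduced SVD) and that passing from the full to the reduced decomposition loses no saturating configuration. The second delicate point is upgrading the scalar identity $\mathrm{Tr}(\mathbf{X})=\|\mathbf{X}\|_{S_1}$ to the structural conclusion on $\mathbf{X}$, where the equality case of Weyl's majorization is what yields symmetry and positive semidefiniteness rather than symmetry alone.
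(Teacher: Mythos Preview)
Your route is correct but genuinely different from the paper's. The paper does \emph{not} invoke von Neumann's trace inequality; instead it expands $\mathrm{Tr}(\mathbf{A}^T\mathbf{B})=\sum_{i,j}\sigma_i\tilde\sigma_j(\mathbf{u}_i^T\tilde{\mathbf{u}}_j)(\mathbf{v}_i^T\tilde{\mathbf{v}}_j)$, bounds this by a \emph{weighted} H\"older inequality in the double index $(i,j)$, and then controls the weights $\sum_j|\mathbf{u}_i^T\tilde{\mathbf{u}}_j\,\mathbf{v}_i^T\tilde{\mathbf{v}}_j|\le 1$ via Cauchy--Schwarz plus orthonormality. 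Saturation is then read off from the equality cases of those two elementary inequalities, which after some linear-algebraic bookkeeping produces an orthogonal change of basis $\mathbf{P}$ linking the two SVDs. Your two-step factorisation (von Neumann $+$ scalar H\"older) is more modular: it outsources all the ``shared singular system'' work to a known theorem, so the case $p\in(1,\infty)$ becomes almost immediate, and the gauge-freedom issue you flag is handled once by the standard statement of the von Neumann equality case. The paper's argument, by contrast, is self-contained and never needs that equality case as a black box, at the cost of the longer computation with $\mathbf{P}$, $\mathbf{Q}$ and the entrywise conditions \eqref{Eq:Zero}--\eqref{Eq:LinDep}.

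There is one soft spot in your $p=1$ argument. You reduce saturation to the scalar identity $\mathrm{Tr}(\mathbf{X})=\|\mathbf{X}\|_{S_1}$ and then appeal to ``the equality case of Weyl's majorization'' to conclude that $\mathbf{X}$ is symmetric positive semidefinite. Weyl gives you the chain $\mathrm{Tr}(\mathbf{X})=\sum_i\lambda_i\le\sum_i|\lambda_i|\le\sum_i\sigma_i$, and equality in the first link forces $\lambda_i\ge 0$; but equality in the second link, $\sum_i|\lambda_i|=\sum_i\sigma_i$, is not the standard Weyl equality statement and does not by itself yield normality in an obvious way. A clean fix, entirely in the spirit of your approach, is to apply the von Neumann equality case once more to the pair $(\mathbf{I},\mathbf{X})$: writing $\mathbf{X}=\mathbf{P}\,\mathrm{diag}(d)\,\mathbf{Q}^T$, the identity $\mathrm{Tr}(\mathbf{X})=\sum_i d_i$ forces $(\mathbf{Q}^T\mathbf{P})_{ii}=1$ whenever $d_i>0$, hence $\mathbf{P}$ and $\mathbf{Q}$ agree on the support of $d$ and $\mathbf{X}=\mathbf{P}\,\mathrm{diag}(d)\,\mathbf{P}^T\succeq 0$. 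Incidentally, your conclusion that $\mathbf{X}$ is positive semidefinite is sharper than the ``symmetric'' stated in \eqref{Eq:FormA1}; the paper's own derivation in fact produces $\mathbf{X}=\mathbf{P}\,\mathrm{diag}(\boldsymbol{\sigma})\,\mathbf{P}^T$ with $\boldsymbol{\sigma}>0$, so positive semidefiniteness is implicit there as well.
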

\begin{remark}\label{Remark}
The reduced SVD is not unique; there are multiple choices for the sub-orthogonal  matrices in \eqref{Eq:RedSVD}. However, the parametric forms  given in Proposition \ref{Prop:HolderSat} do not depend on  a specific decomposition.
\end{remark}
 The proof of Proposition \ref{Prop:HolderSat} can be found in  \ref{App:HolderSat}. We observe that,  in the case $p\in(1,\infty)$, the saturation of H\"older inequality provides a very tight link between the two matrices: If we know one of them, then the other lies in a one-dimensional ray that is parameterized by the constant $c>0$. However, in the special case $p=1$, the identification is not as simple. There again, for a given  matrix ${\bf B}$,  one can fully characterize the set of admissible matrices ${\bf A}$. However, for the reverse direction,  an additional rank-equality  constraint  is essential to reduce the set of admissible matrices ${\bf B}$ to just   one ray. 

Inspired from Proposition \ref{Prop:HolderSat}, we now propose our main result  in Theorem \ref{Thm:p}, where  we explicitly characterize the duality mapping for the Schatten $p$-norms. The proof of Theorem \ref{Thm:p} can be found in  \ref{App:p}. 
\begin{theorem}\label{Thm:p}
Let $p,q\in [1,+\infty]$ be a pair of H\"older conjugates with $\frac{1}{p}+\frac{1}{q}=1$ and ${\bf A}\in\mathbb{R}^{m\times n}$ a matrix whose reduced SVD is specified in \eqref{Eq:ReducedSVD}.
\begin{itemize}
\item If $1<p<+\infty$, then the single-valued duality mapping ${\rm J}_{S_p}: \mathbb{R}^{m\times n} \rightarrow \mathbb{R}^{m\times n}$ is well-defined and can be expressed as  
\begin{equation}
{\rm J}_{S_p}:{\bf A} = {\bf U }_r {\rm diag}(\boldsymbol{\sigma} ) {\bf V}_r^T \mapsto {\bf A}^*= {\bf U}_r{\rm diag}({\rm J}_p(\boldsymbol{\sigma} )){\bf V}_r^T.
\end{equation}
\item If $p=1$ and if we consider the rank function as the sparsity measure in Definition \ref{Def:SparseDual}, then the sparse duality mapping ${\rm J}_{S_1,{\rm rank}}: \mathbb{R}^{m\times n} \rightarrow \mathbb{R}^{m\times n}$ is well-defined (singleton)  and is given as  
\begin{equation}
{\rm J}_{S_1,{\rm rank}}:{\bf A} = {\bf U }_r {\rm diag}(\boldsymbol{\sigma}) {\bf V}_r^T \mapsto {\bf A}^*= \|\boldsymbol{\sigma}\|_{1} {\bf U} _r{\bf V}_r^T.
\end{equation}
\item If $p=+\infty$, then the set-valued mapping $\mathcal{J}_{S_\infty}(\cdot)$  can be described as 
\begin{equation}\label{Eq:Jinf}
\mathcal{J}_{S_\infty}({\bf A})= \left\{  \sigma_1 {\bf U}_1 {\bf X} {\bf V}_1^T:  {\bf X}\in\mathbb{R}^{r_1\times r_1} \text{ is symmetric and } \|{\bf X}\|_{S_1} =1 \right\},
\end{equation}
where $r_1$ denotes the multiplicity of the first singular value $\sigma_1$ of ${\bf A}$ and ${\bf U}_1,{\bf V}_1$ are singular vectors that correspond to $\sigma_1$ in \eqref{Eq:ReducedSVD}. It is a convex set whose extreme points  are ${\bf E}_{i,j} = \frac{\sigma_1}{2}({\bf u}_i {\bf v}_j^T +{\bf v}_i {\bf u}_j^T )$ for $1\leq i\leq j\leq r_1$. Finally, the set of sparse dual conjugates is the collection of rank-1 elements of $\mathcal{J}_{S_\infty}({\bf A})$ which can be characterized as 
\begin{equation}
\mathcal{J}_{S_\infty, {\rm rank}}({\bf A}) = \{ \sigma_1 {\bf U}_1 {\bf p}{\bf p}^T {\bf V}_1^T: {\bf p}\in \mathbb{R}^{r_1} , \|{\bf p}\|_2=1\}.
\end{equation}
\end{itemize}
\end{theorem}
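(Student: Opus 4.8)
The plan is to extract the duality mapping directly from the saturation analysis of Proposition \ref{Prop:HolderSat} and then to impose the second requirement of Definition \ref{Def:DualConj}, namely the normalization $\|{\bf B}\|_{S_q} = \|{\bf A}\|_{S_p}$. By definition, $\mathcal{J}_{S_p}({\bf A})$ is the set of matrices ${\bf B}$ that simultaneously saturate the H\"older inequality \eqref{Eq:HolderSat} and have equal dual norm. Proposition \ref{Prop:HolderSat} already resolves the first condition in each regime, so it remains only to intersect the resulting family of saturating matrices with the sphere $\{\|{\bf B}\|_{S_q} = \|{\bf A}\|_{S_p}\}$. The independence of all closed-form expressions from the particular reduced SVD is furnished by Remark \ref{Remark}, so I need not worry about that ambiguity.

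For $1<p<+\infty$, Proposition \ref{Prop:HolderSat} states that the saturating matrices form the ray ${\bf B} = c\,{\bf U}_r{\rm diag}({\rm J}_p(\boldsymbol{\sigma})){\bf V}_r^T$, $c>0$, of \eqref{Eq:FormBgenP}. Because $\sigma_i>0$, the entries of ${\rm J}_p(\boldsymbol{\sigma})$ in \eqref{Eq:DualMapLp} are positive, so this expression is itself a reduced SVD and the singular values of ${\bf B}$ are $c\,\sigma_i^{p-1}/\|\boldsymbol{\sigma}\|_p^{p-2}$. Invoking the H\"older identities $(p-1)q=p$ and $p/q=p-1$, a one-line computation yields $\|{\bf B}\|_{S_q} = c\,\|\boldsymbol{\sigma}\|_p = c\,\|{\bf A}\|_{S_p}$, so the normalization $\|{\bf B}\|_{S_q}=\|{\bf A}\|_{S_p}$ forces $c=1$. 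This isolates a single conjugate, which is exactly the claimed ${\bf A}^*$, and single-valuedness of ${\rm J}_{S_p}$ follows immediately.

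For $p=1$ (so $q=\infty$), every conjugate ${\bf B}$ obeys ${\rm rank}({\bf B})\ge r_1\ge {\rm rank}({\bf A})=r$ by Proposition \ref{Prop:HolderSat}. Minimizing the rank over $\mathcal{J}_{S_1}({\bf A})$, in the sense of Definition \ref{Def:SparseDual}, therefore drives ${\rm rank}({\bf B})$ down to $r$, which places us in the rank-equality regime; there the same proposition gives ${\bf B}=c\,{\bf U}_r{\bf V}_r^T$ with $c=\|{\bf B}\|_{S_\infty}$, and the normalization $\|{\bf B}\|_{S_\infty}=\|{\bf A}\|_{S_1}=\|\boldsymbol{\sigma}\|_1$ fixes $c=\|\boldsymbol{\sigma}\|_1$. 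A short direct check---${\rm Tr}({\bf A}^T{\bf B})=\|\boldsymbol{\sigma}\|_1^2=\|{\bf A}\|_{S_1}\|{\bf B}\|_{S_\infty}$ and ${\bf U}_r{\bf V}_r^T$ has all singular values equal to one---confirms that this matrix is genuinely a conjugate of minimal rank $r$, and since any rank-$r$ conjugate must take this form, the minimizer is unique. Hence ${\rm J}_{S_1,{\rm rank}}$ is a well-defined singleton of the stated form.

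The case $p=+\infty$ (so $q=1$) is the most delicate and is where I expect the main difficulty. Applying Proposition \ref{Prop:HolderSat} with the roles of the two matrices exchanged, any H\"older-saturating ${\bf B}$ must take the form ${\bf B}=\sigma_1{\bf U}_1{\bf X}{\bf V}_1^T$ with ${\bf X}$ symmetric, where ${\bf U}_1,{\bf V}_1$ collect the top singular vectors of ${\bf A}$. The key computation is to feed this form back into the two conjugacy conditions: splitting ${\bf A}=\sigma_1{\bf U}_1{\bf V}_1^T+{\bf A}_{\rm rest}$ and using ${\bf U}_1^T{\bf A}_{\rm rest}=0$, one finds ${\rm Tr}({\bf A}^T{\bf B})=\sigma_1^2\,{\rm Tr}({\bf X})$ while $\|{\bf B}\|_{S_1}=\sigma_1\|{\bf X}\|_{S_1}$, so saturation together with $\|{\bf B}\|_{S_1}=\|{\bf A}\|_{S_\infty}=\sigma_1$ is equivalent to ${\rm Tr}({\bf X})=\|{\bf X}\|_{S_1}=1$, i.e. to ${\bf X}$ being symmetric positive semidefinite with unit trace. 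This identifies $\mathcal{J}_{S_\infty}({\bf A})$ with the scaled set of density matrices, from which the remaining claims follow. Convexity is immediate, either from the general fact that a duality-mapping image is convex (if ${\bf B}_1,{\bf B}_2$ are conjugates then $\langle{\bf A},\lambda{\bf B}_1+(1-\lambda){\bf B}_2\rangle=\sigma_1^2$ and the sandwich $\sigma_1^2\le\|{\bf A}\|_{S_\infty}\|\lambda{\bf B}_1+(1-\lambda){\bf B}_2\|_{S_1}\le\sigma_1^2$ forces saturation and equal norm) or directly from convexity of the density-matrix set. The extreme points are obtained by diagonalizing ${\bf X}$ and recalling that the extreme points of the unit-trace positive semidefinite matrices are the rank-one projectors ${\bf p}{\bf p}^T$ with $\|{\bf p}\|_2=1$; mapping these back gives the matrices $\sigma_1{\bf U}_1{\bf p}{\bf p}^T{\bf V}_1^T$. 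Finally, the sparse conjugates are precisely the rank-one elements of $\mathcal{J}_{S_\infty}({\bf A})$, and since ${\rm rank}({\bf B})={\rm rank}({\bf X})$ the rank-one members are exactly those extreme points, which yields the last formula. The main obstacle is the bookkeeping in this case: tracking how the symmetric factor ${\bf X}$ governs the singular values and rank of ${\bf B}$, and verifying that the combined trace and nuclear-norm constraints collapse the a priori arbitrary symmetric parameter to a density matrix---the single fact that simultaneously delivers convexity, the extreme-point description, and the rank-one characterization of the sparse mapping.
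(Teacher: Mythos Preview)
Your Cases I and II match the paper's proof almost verbatim: both invoke Proposition~\ref{Prop:HolderSat} to parametrize the saturating matrices and then impose the normalization $\|{\bf B}\|_{S_q}=\|{\bf A}\|_{S_p}$ to pin down the unique conjugate (together with the rank-minimality constraint in Case~II). Your explicit computation of $\|{\bf B}\|_{S_q}$ in Case~I and your existence check in Case~II are small clarifications that the paper omits.

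In Case~III you follow the same framework but go one step further than the paper. The paper stops after deriving ${\bf B}=\sigma_1{\bf U}_1{\bf X}{\bf V}_1^T$ with ${\bf X}$ symmetric and $\|{\bf X}\|_{S_1}=1$, and then asserts without justification that $\mathcal{J}_{S_\infty}({\bf A})$ is the convex hull of the ${\bf E}_{i,j}$. You additionally compute ${\rm Tr}({\bf A}^T{\bf B})=\sigma_1^{2}\,{\rm Tr}({\bf X})$ and observe that saturation together with the normalization forces ${\rm Tr}({\bf X})=\|{\bf X}\|_{S_1}=1$, i.e.\ ${\bf X}$ must be \emph{positive semidefinite}. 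This extra constraint is correct and strictly sharper than the paper: the set described in \eqref{Eq:Jinf} with merely symmetric ${\bf X}$ of unit nuclear norm is not even convex, and for $i\neq j$ the matrix ${\bf E}_{i,j}$ corresponds to ${\bf X}=\tfrac12({\bf e}_i{\bf e}_j^T+{\bf e}_j{\bf e}_i^T)$ with eigenvalues $\pm\tfrac12$, so ${\rm Tr}({\bf A}^T{\bf E}_{i,j})=0\neq\sigma_1^{2}$ and ${\bf E}_{i,j}\notin\mathcal{J}_{S_\infty}({\bf A})$. Your extreme-point identification (the rank-one elements $\sigma_1{\bf U}_1{\bf p}{\bf p}^T{\bf V}_1^T$, an infinite family once $r_1\ge 2$) is therefore the correct one for the actual duality set, but it does \emph{not} establish the theorem's stated claim that the extreme points are the finitely many ${\bf E}_{i,j}$; that claim, and the paper's one-line argument for it, appear to be in error rather than a gap in your reasoning. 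The convexity assertion and the description of $\mathcal{J}_{S_\infty,{\rm rank}}({\bf A})$ are handled correctly by both proofs.
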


\section{Discussion}\label{Sec:discuss}
Theorem \ref{Thm:p} provides an interesting characterization of the duality mapping in three scenarios: The first case is $1<p<+\infty$ which is the most straightforward one. Theorem \ref{Thm:p} tells us  that the mapping is single-valued and also gives a formula to compute the dual conjugate ${\bf A}^*$ of any matrix ${\bf A}\in\mathbb{R}^{m\times n}$.  We use this result to deduce the continuity of the duality mapping as well as the strict convexity of the Schatten space in this case (see Corollary \ref{Cor:strict}).  In the second case, with $p=1$, the mapping is not single-valued. However, there is a unique element in the set of dual conjugates with the minimal rank (that is equal to the rank of ${\bf A}$) and, hence, we can construct a single-valued sparse duality mapping. Finally, we showed in the third case, characterized by  $p=+\infty$, that neither the set of dual conjugates nor the ones with the minimal rank are   unique. However, we observe in \eqref{Eq:Jinf} that the entries of ${\bf X}$ can be independently chosen, up to symmetry and normalization assumptions. This suggests that the  dimension of $\mathcal{J}_{S_\infty}({\bf A})$ is $d=\left( \frac{r_1(r_1+1)}{2}-1\right)$. Moreover, we show that this convex set has exactly $(d+1)$ extreme points, which is the minimal number for a convex set of dimension $d$. We also observe that the extreme points of $\mathcal{J}_{S_\infty}({\bf A})$ are low-rank. They are indeed a collection of rank-1 and rank-2 matrices. 

In Corollary \ref{Cor:strict}, we highlight some consequences of Theorem \ref{Thm:p} concerning the strict convexity of Schatten spaces and the continuity of the duality mapping. 
\begin{corollary}\label{Cor:strict}
The Banach space of $m$ by $n$ matrices equipped with the Schatten-$p$ norm is strictly convex, if and only if $p\in (1,+\infty)$. In this case,   the function ${\rm J}_{S_p} : \mathbb{R}^{m\times n} \rightarrow \mathbb{R}^{m\times n}$ is continuous.
\end{corollary}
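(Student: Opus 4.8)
The plan is to treat the corollary in three pieces: strict convexity when $p\in(1,\infty)$, its failure when $p\in\{1,\infty\}$, and continuity of ${\rm J}_{S_p}$ on $(1,\infty)$. Throughout I would assume $\min(m,n)\geq 2$, noting at the outset that in the degenerate case $\min(m,n)=1$ every matrix has a single singular value equal to its Euclidean norm, so all Schatten norms collapse to the $\ell_2$ norm and the space is strictly convex for every $p$ (so the ``only if'' direction genuinely needs $\min(m,n)\ge 2$). For the forward implication I would argue via the midpoint: suppose $\|{\bf A}\|_{S_p}=\|{\bf B}\|_{S_p}=1$ and the midpoint ${\bf C}=\tfrac12({\bf A}+{\bf B})$ also satisfies $\|{\bf C}\|_{S_p}=1$, and pick any ${\bf D}\in\mathcal{J}_{S_p}({\bf C})$, so that $\|{\bf D}\|_{S_q}=1$ and $\langle{\bf C},{\bf D}\rangle=1$. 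Writing $\langle{\bf C},{\bf D}\rangle=\tfrac12\langle{\bf A},{\bf D}\rangle+\tfrac12\langle{\bf B},{\bf D}\rangle$ and bounding each term by Proposition \ref{Prop:Holder} forces both inequalities to be equalities, i.e. $({\bf A},{\bf D})$ and $({\bf B},{\bf D})$ are both $(S_p,S_q)$-conjugates. Since the conjugate relation of Definition \ref{Def:DualConj} is symmetric (the inner product is symmetric and the space is reflexive), we get ${\bf A},{\bf B}\in\mathcal{J}_{S_q}({\bf D})$; as $q\in(1,\infty)$, Theorem \ref{Thm:p} shows $\mathcal{J}_{S_q}({\bf D})$ is a singleton, whence ${\bf A}={\bf B}$. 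Thus the unit sphere carries no nontrivial segment and $S_p$ is strictly convex.

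For the converse, I would exhibit a flat piece of the unit sphere by restricting to diagonal matrices. The matrices $\sum_i a_i {\bf e}_i {\bf e}_i^T$ form a subspace of $\mathbb{R}^{m\times n}$ on which the Schatten-$p$ norm equals the $\ell_p$ norm of $(a_i)\in\mathbb{R}^{\min(m,n)}$, because the singular values of such a matrix are the $|a_i|$; this is an isometric copy of $\ell_p^{\min(m,n)}$. A subspace of a strictly convex space is strictly convex, so it suffices to recall that $\ell_1$ and $\ell_\infty$ on $\mathbb{R}^k$ with $k\ge 2$ are not strictly convex (take ${\bf e}_1,{\bf e}_2$ for $p=1$ and ${\bf e}_1,{\bf e}_1+{\bf e}_2$ for $p=\infty$). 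Hence $S_1$ and $S_\infty$ are not strictly convex, which together with the previous paragraph yields the stated equivalence.

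For continuity, instead of differentiating the closed-form expression I would route through convex analysis. Consider the finite convex function $f({\bf A})=\tfrac12\|{\bf A}\|_{S_p}^2$ on $\mathbb{R}^{m\times n}$, whose Fenchel conjugate is $\tfrac12\|\cdot\|_{S_q}^2$. The Fenchel--Young identity gives ${\bf D}\in\partial f({\bf A})$ iff $\tfrac12\|{\bf A}\|_{S_p}^2+\tfrac12\|{\bf D}\|_{S_q}^2=\langle{\bf A},{\bf D}\rangle$, and since $\langle{\bf A},{\bf D}\rangle\le\|{\bf A}\|_{S_p}\|{\bf D}\|_{S_q}\le\tfrac12\|{\bf A}\|_{S_p}^2+\tfrac12\|{\bf D}\|_{S_q}^2$, this equality is equivalent to $({\bf A},{\bf D})$ being an $(S_p,S_q)$-conjugate; that is, $\partial f({\bf A})=\mathcal{J}_{S_p}({\bf A})$. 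By Theorem \ref{Thm:p} this subdifferential is a singleton for every ${\bf A}$, so $f$ is everywhere differentiable with $\nabla f={\rm J}_{S_p}$, and a finite convex function on $\mathbb{R}^N$ that is differentiable everywhere has a continuous gradient. Hence ${\rm J}_{S_p}$ is continuous.

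The step I expect to be the main obstacle is continuity: the natural impulse is to read it off the explicit formula ${\bf A}\mapsto{\bf U}_r{\rm diag}({\rm J}_p(\boldsymbol{\sigma})){\bf V}_r^T$, but the singular vectors ${\bf U}_r,{\bf V}_r$ do not depend continuously on ${\bf A}$ at singular-value crossings, so a direct argument is delicate. The value of the convex-analytic route is that only the norm (a continuous, in fact here differentiable, object) is differentiated and the gradient is identified with the duality mapping afterwards. The care therefore lies in verifying the subdifferential identity $\partial(\tfrac12\|\cdot\|_{S_p}^2)=\mathcal{J}_{S_p}$ and in invoking the precise fact that an everywhere-differentiable finite convex function on $\mathbb{R}^N$ is continuously differentiable.
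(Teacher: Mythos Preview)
Your proposal is correct, and it takes a genuinely different route from the paper's proof in two of the three parts.

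For strict convexity when $p\in(1,\infty)$, the paper simply observes that ${\rm J}_{S_p}$ is bijective (from Theorem~\ref{Thm:p}), notes that finite-dimensional spaces are reflexive, and invokes an external characterization (Petryshyn) that reflexivity plus a bijective duality map forces strict convexity. You instead run a direct midpoint argument: a flat segment on the sphere would produce two distinct conjugates of a single ${\bf D}$, contradicting the singleton statement of Theorem~\ref{Thm:p} applied to the dual exponent $q$. Your argument is more self-contained and makes the mechanism transparent; the paper's is one sentence but outsources the work to a citation.

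For the failure at $p\in\{1,\infty\}$ you and the paper give essentially the same diagonal counterexamples. Your remark that the ``only if'' direction requires $\min(m,n)\ge 2$ is a correct observation that the paper glosses over.

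For continuity, the paper again proceeds by citation: it quotes that the Schatten-$p$ norm is Fr\'echet differentiable for $p\in(1,\infty)$ and that Fr\'echet differentiability of the norm implies norm-to-norm continuity of the duality mapping. Your convex-analysis route---identifying $\partial\bigl(\tfrac12\|\cdot\|_{S_p}^2\bigr)$ with $\mathcal{J}_{S_p}$ via Fenchel--Young and then invoking the standard fact that an everywhere-differentiable finite convex function on $\mathbb{R}^N$ is $C^1$---is again more self-contained and sidesteps the need to know the (nontrivial) Fr\'echet differentiability of $\|\cdot\|_{S_p}$ in advance. Both approaches avoid the trap you correctly flag, namely trying to read continuity off the SVD formula directly despite the discontinuity of singular vectors at eigenvalue crossings.
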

\begin{proof}
For $p\in (1,+\infty)$, we know from Theorem \ref{Thm:p} that the duality mapping ${\rm J}_{S_p}$ is bijective. Moreover, it is known that all finite-dimensional Banach spaces are reflexive. Now, following \cite{petryshyn1970characterization}, we deduce the strict convexity of the space of $m$ by $n$ matrices with Schatten-$p$ norm.

For $p=1$ and $p=+\infty$, we can readily verify that
\begin{align*}
&\left \|\alpha \begin{pmatrix}1 & \cdots & 0 \\ \vdots & \ddots & \vdots \\ 0 & \cdots & 0 \end{pmatrix} +(1-\alpha)\begin{pmatrix}0 & \cdots & 0 \\ \vdots & \ddots & \vdots \\ 0 & \cdots & 1 \end{pmatrix}  \right\|_{S_1} = \left \|\begin{pmatrix} \alpha & \cdots & 0 \\ \vdots & \ddots & \vdots \\ 0 & \cdots & (1-\alpha) \end{pmatrix} \right\|_{S_1}=1, \\
&\left\|\alpha \begin{pmatrix}1 & \cdots & 0 \\ \vdots & \ddots & \vdots \\ 0 & \cdots & 0 \end{pmatrix}  +(1-\alpha)\begin{pmatrix}1 & \cdots & 0 \\ \vdots & \ddots & \vdots \\ 0 & \cdots & 1 \end{pmatrix}  \right\|_{S_\infty} = \left\| \begin{pmatrix}1 & \cdots & 0 \\ \vdots & \ddots & \vdots \\ 0 & \cdots & (1-\alpha) \end{pmatrix}  \right\|_{S_\infty}=1, 
\end{align*}
for all $\alpha \in (0,1)$, which  shows that the Schatten space is not strictly convex for $p=1,+\infty$. 

Finally, the Schatten-$p$ norm is  known to be  Fr\'echet differentiable  for $p\in(1,+\infty)$ \cite{potapov2014frechet}. Moreover,  the duality mapping of any  Banach space with Fr\'echet-differentiable norms is guaranteed to be continuous \cite{giles1978geometrical,contreras1994upper}. Combining the two statements, we deduce the continuity of the duality mapping in this case. 
\end{proof}
By contrast, the sparse duality mapping ${\rm J}_{S_1,{\rm rank}}(\cdot)$ is not continuous. This is best explained by providing a  counterexample. Specifically, let us consider the sequence of 2 by 2 matrices 
$${\bf S}_k=\begin{pmatrix} 1 & 0\\ 0 & \frac{1}{k}\end{pmatrix}, \quad k\in\mathbb{N}.$$
It is clear that ${\bf S}_k \rightarrow {\bf S}_{\infty} =\begin{pmatrix} 1 & 0\\ 0 & 0\end{pmatrix}$. However, we   have that 
$$\forall k\in\mathbb{N}: {\rm J}_{S_1,{\rm rank}}({\bf S}_k) = \begin{pmatrix} 1 & 0\\ 0 & 1\end{pmatrix}, \quad \text{while} \quad {\rm J}_{S_1,{\rm rank}}({\bf S}_{\infty}) = \begin{pmatrix} 1 & 0\\ 0 & 0\end{pmatrix},$$
which shows the discontinuity of ${\rm J}_{S_1,{\rm rank}}$ in the space of 2 by 2 matrices. This can be generalized to space of matrices with arbitrary dimensions $m,n\in\mathbb{N}$. 

Although ${\rm J}_{S_1,{\rm rank}}$ is not continuous, we  now show that it is Borel-measurable and, hence, that it can be approximated with arbitrary precision by a continuous mapping due to Lusin's theorem \cite{rudin1991functional}.
\begin{proposition}\label{Prop:measure}
For any $m,n\in\mathbb{N}$, the sparse duality mapping ${\rm J}_{S_1,{\rm rank}}$ is  a Borel-measurable matrix-valued function over the space of $m$ by $n$ matrices. 
\end{proposition}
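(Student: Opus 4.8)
The plan is to decompose $\mathbb{R}^{m\times n}$ into rank strata and to argue that ${\rm J}_{S_1,{\rm rank}}$ is continuous on each stratum, from which global Borel-measurability follows by a gluing argument. For $0\le k\le \min(m,n)$, set $R_k=\{{\bf A}\in\mathbb{R}^{m\times n}: {\rm rank}({\bf A})=k\}$. The set $\{{\bf A}:{\rm rank}({\bf A})\le k\}$ is the common zero set of all $(k+1)\times(k+1)$ minors and is therefore closed; consequently each $R_k$, being the difference of two closed sets, is a Borel set. Since the finitely many $R_k$ partition $\mathbb{R}^{m\times n}$, it suffices to show that the restriction ${\rm J}_{S_1,{\rm rank}}|_{R_k}$ is Borel-measurable for each $k$: for any Borel set $B$ one then has ${\rm J}_{S_1,{\rm rank}}^{-1}(B)=\bigcup_{k}({\rm J}_{S_1,{\rm rank}}|_{R_k})^{-1}(B)$, a finite union of Borel sets.

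I would first recast the mapping in a form that is manifestly independent of the choice of SVD. By Theorem \ref{Thm:p}, ${\rm J}_{S_1,{\rm rank}}({\bf A})=\|{\bf A}\|_{S_1}\,{\bf U}_r{\bf V}_r^T$, and a direct computation on the reduced SVD shows that the factor ${\bf U}_r{\bf V}_r^T$---the partial-isometry factor of the polar decomposition of ${\bf A}$---can be written intrinsically as
\begin{equation}
{\bf U}_r{\bf V}_r^T = {\bf A}\,\bigl(({\bf A}^T{\bf A})^{1/2}\bigr)^{+},
\end{equation}
where $(\cdot)^{1/2}$ denotes the positive-semidefinite square root and $(\cdot)^{+}$ the Moore--Penrose pseudoinverse; this is consistent with Remark \ref{Remark}. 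The scalar factor ${\bf A}\mapsto\|{\bf A}\|_{S_1}$ is continuous on all of $\mathbb{R}^{m\times n}$ since it is a norm, so everything reduces to the continuity of the matrix factor on each stratum.

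I would then obtain continuity of ${\bf A}\mapsto {\bf A}\bigl(({\bf A}^T{\bf A})^{1/2}\bigr)^{+}$ on $R_k$ as a composition of three continuous maps: the polynomial map ${\bf A}\mapsto {\bf A}^T{\bf A}$; the square root ${\bf B}\mapsto {\bf B}^{1/2}$, which is continuous on the cone of symmetric positive-semidefinite matrices by the continuous functional calculus; and the pseudoinverse ${\bf C}\mapsto {\bf C}^{+}$. The essential observation is that ${\rm rank}\bigl(({\bf A}^T{\bf A})^{1/2}\bigr)={\rm rank}({\bf A})=k$ is constant along $R_k$, so the pseudoinverse---which is continuous precisely on sets of constant rank---is continuous there. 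A continuous map on the Borel set $R_k$ is Borel-measurable, and the gluing argument above then yields Borel-measurability on all of $\mathbb{R}^{m\times n}$ (the stratum $R_0=\{{\bf 0}\}$ being handled trivially by the convention ${\rm J}_{S_1,{\rm rank}}({\bf 0})={\bf 0}$).

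The main obstacle is exactly the continuity of the pseudoinverse factor. The pseudoinverse is discontinuous across rank jumps, and this is the very mechanism underlying the counterexample ${\bf S}_k\to{\bf S}_\infty$ that shows ${\rm J}_{S_1,{\rm rank}}$ is globally discontinuous; restricting to constant-rank strata is therefore indispensable. I would invoke (or briefly reprove via a perturbation argument on the SVD) the classical fact that ${\bf C}_n\to {\bf C}$ together with ${\rm rank}({\bf C}_n)={\rm rank}({\bf C})$ forces ${\bf C}_n^{+}\to {\bf C}^{+}$, which is the single non-elementary ingredient of the proof.
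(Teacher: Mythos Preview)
Your proof is correct and follows a genuinely different route from the paper's own argument.

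Both proofs begin by stratifying $\mathbb{R}^{m\times n}$ by rank and observing that each stratum is Borel. From there the paths diverge. The paper works in the target: for a Borel set $\mathcal{B}$ it forms, on each stratum $\mathcal{R}_r$, the relation $\mathcal{P}_r=\{({\bf A},{\bf B})\in\mathcal{R}_r\times\mathcal{B}:\ \mathrm{Tr}({\bf A}^T{\bf B})=\|{\bf A}\|_{S_1}\|{\bf B}\|_{S_\infty},\ \|{\bf A}\|_{S_1}=\|{\bf B}\|_{S_\infty}\}$, notes this is Borel by continuity of trace and norms, and then invokes a measurable-relation projection result (Himmelberg) to conclude that its domain is Borel. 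Your argument instead works in the source: you identify ${\bf U}_r{\bf V}_r^T$ with ${\bf A}\bigl(({\bf A}^T{\bf A})^{1/2}\bigr)^{+}$ and reduce everything to the classical fact that the Moore--Penrose pseudoinverse is continuous on sets of constant rank. This is more elementary---no descriptive-set-theoretic projection lemma is needed---and it actually yields a sharper conclusion than mere Borel-measurability: ${\rm J}_{S_1,{\rm rank}}$ is continuous on each rank stratum. The paper's approach, on the other hand, is more readily adaptable to settings where no closed-form expression for the duality map is available and only its graph is accessible. The one external ingredient you rely on (Stewart-type continuity of $(\cdot)^{+}$ under constant rank) is standard and of the same weight as the Himmelberg reference the paper cites.
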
 
Before going into the proof of Proposition \ref{Prop:measure}, we present a preliminary result. 
\begin{lemma}\label{Lem}
The set  $\mathcal{R}_{ r}\subseteq \mathbb{R}^{m\times n}$  of $m$ by $n$  matrices of rank $r$ is Borel-measurable. 
\end{lemma}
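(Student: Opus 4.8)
The plan is to characterize the rank through minors and to exploit the fact that each minor is a polynomial---hence continuous---function of the matrix entries. Concretely, for each integer $k$ with $1\le k \le \min(m,n)$ and each choice of $k$ rows and $k$ columns, the determinant of the corresponding $k\times k$ submatrix defines a map $\mathbb{R}^{m\times n}\rightarrow \mathbb{R}$ that is polynomial in the entries of the matrix, and therefore continuous. The starting point is the classical identification that ${\rm rank}({\bf A})\ge k$ if and only if at least one of these $k\times k$ minors is nonzero.

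First I would show that the set $\mathcal{S}_{\ge k}:=\{{\bf A}: {\rm rank}({\bf A})\ge k\}$ is open. Fixing an enumeration $d_1,\ldots,d_N$ of the $k\times k$ minors (viewed as continuous functions of ${\bf A}$), each set $\{{\bf A}: d_j({\bf A})\neq 0\}$ is the preimage of the open set $\mathbb{R}\setminus\{0\}$ under the continuous map $d_j$, hence open. By the minor characterization, $\mathcal{S}_{\ge k}=\bigcup_{j=1}^N \{{\bf A}: d_j({\bf A})\neq 0\}$ is a finite union of open sets, and therefore open.

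Then I would write $\mathcal{R}_{r}$ as a difference of two such sets, namely $\mathcal{R}_{r}=\mathcal{S}_{\ge r}\setminus \mathcal{S}_{\ge r+1}$, since a matrix has rank exactly $r$ precisely when it has rank at least $r$ but not at least $r+1$. As the set difference of two open sets, $\mathcal{R}_{r}=\mathcal{S}_{\ge r}\cap\left(\mathcal{S}_{\ge r+1}\right)^{c}$ is the intersection of an open set with a closed set, hence Borel-measurable. The edge cases are routine: for $r=0$ the set $\mathcal{R}_0=\{{\bf 0}\}$ is closed, while for $r>\min(m,n)$ it is empty, and both are trivially Borel.

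There is essentially no serious obstacle here; the only facts to make precise are the minor characterization of rank and the polynomiality---hence continuity---of the determinant map, both of which are standard. The single point worth stating carefully is that $\mathcal{R}_{r}$ is in general neither open nor closed but only locally closed, so the conclusion genuinely relies on the closure of the Borel $\sigma$-algebra under complements and finite intersections rather than on openness alone.
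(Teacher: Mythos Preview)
Your argument is correct and in fact sharper than what is needed: you show that $\mathcal{R}_r$ is locally closed (the intersection of an open set with a closed set), which immediately gives Borel measurability. The paper proceeds quite differently. It first observes that the set of matrices of the form ${\bf u}{\bf v}^T$ is the image of the continuous map $({\bf u},{\bf v})\mapsto {\bf u}{\bf v}^T$, then writes $\mathcal{R}_{\le r}$ as an $r$-fold sumset of this image, and finally takes $\mathcal{R}_r=\mathcal{R}_{\le r}\setminus\mathcal{R}_{\le r-1}$. Your minor-based argument is more elementary and self-contained: it avoids the subtlety that continuous images of Borel sets are not Borel in general (the paper's step works here only because $\mathbb{R}^{m}\times\mathbb{R}^{n}$ is $\sigma$-compact, a point the paper leaves implicit), and it yields the finer topological information that $\mathcal{R}_{\le r}$ is closed and $\mathcal{R}_r$ is locally closed. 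The paper's approach, on the other hand, ties the measurability directly to the rank-one decomposition structure that is thematically central to the surrounding discussion of SVD.
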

\begin{proof}
First note that 
$$\mathcal{R}_{1}=\{ {\bf u}{\bf v}^T: {\bf u}\in\mathbb{R}^m, {\bf v}\in \mathbb{R}^n\}. $$
The set $\mathcal{R}_{1}$ is the image of the continuous mapping $\mathbb{R}^{m}\times \mathbb{R}^n \rightarrow \mathbb{R}^{m\times n}:({\bf u}, {\bf v}) \mapsto  {\bf u}{\bf v}^T$ and, hence, is Borel-measurable. 

Now, denote by  $\mathcal{R}_{\leq r}\subseteq \mathbb{R}^{m\times n}$, the set of matrices with rank no more than $r$. Using the identity 
$$\mathcal{R}_{\leq r} =  \mathcal{R}_{1} + \cdots + \mathcal{R}_{1} ,\quad \text{(r times)}, $$
we deduce that $\mathcal{R}_{\leq r}$ and, consequently, $\mathcal{R}_{r} =  \mathcal{R}_{\leq r}  \backslash \mathcal{R}_{\leq (r-1)}$ are also Borel-measurable sets.
\end{proof}
\begin{proof}[Proof of Proposition \ref{Prop:measure}]
Consider a Borel-measurable set $\mathcal{B}\subseteq \mathbb{R}^{m\times n}$. We show that $\mathcal{B}_{\rm inv}= {\rm J}_{S_1,{\rm rank}}^{-1}(\mathcal{B})$ is also Borel-measurable. By defining  $\mathcal{B}_{{\rm inv},r} = \mathcal{B}_{\rm inv}\cap \mathcal{R}_{r}$, we can partition   $ \mathcal{B}_{\rm inv}$ as
$$\mathcal{B}_{\rm inv}= \bigcup_{r=1}^{\min (m,n)}  \mathcal{B}_{\rm inv}\cap \mathcal{R}_{r}.$$
Hence, it is sufficient to show that each partition $\mathcal{B}_{\rm inv}\cap \mathcal{R}_{r}$ is Borel-measurable. 

Define the set $\mathcal{P}_{r} \subseteq \mathcal{R}_{r}^2$ as 
$$\mathcal{P}_{r} = \{ ({\bf A},{\bf B}) \in  \mathcal{R}_{r} \times \mathcal{B}: {\rm Tr}({\bf A}^T{\bf B}) = \|{\bf A}\|_{S_1} \|{\bf B}\|_{S_\infty}, \quad \|{\bf A}\|_{S_1}= \|{\bf B}\|_{S_\infty}\}.$$
The set $\mathcal{P}_{r}$ introduces a relation over $ \mathcal{R}_{r} $ whose domain is $\mathcal{B}_{\rm inv}\cap \mathcal{R}_{r}$. Since the trace and norm are continuous (and, consequently, Borel-measurable) functions and  $\mathcal{R}_{r}\times \mathcal{B}$ is a Borel-measurable set (using Lemma \ref{Lem}), we deduce that the relation induced from $\mathcal{P}_{r}$ is Borel-measurable as well. Finally, we use \cite[Proposition 2.1]{himmelberg1975measurable} to show that its domain is Borel-measurable.  
\end{proof}

\section{Conclusion}
In this paper, we studied the duality mapping in finite-dimensional Schatten spaces. Based on a careful investigation of the cases where the  H\"older inequality saturates, we provided an explicit form for this mapping when $p\in (1,+\infty)$. Furthermore,  by adding a rank constraint, we proved that the mapping becomes single-valued for the special case $p=1$. As for $p=+\infty$, we showed that the mapping yields a convex set whose extreme points are low-rank matrices. Finally, we discussed our theorem and studied the continuity of the introduced mappings as well as the strict convexity of the Schatten spaces. A possible future direction of research is to extend the results of this paper to infinite-dimensional Schatten spaces  and even, in full generality, to linear operators over Hilbert spaces.

\appendix
\section{Proof of Proposition \ref{Prop:Holder}}\label{App:Holder}
\begin{proof}
Let us recall the reduced SVD of the matrix ${\bf A}$ as 
\begin{equation}\label{Eq:RedSVDA}
{\bf A}= {\bf U}_r {\bf S}_r {\bf V}_r^T,  
\end{equation}
where $r={\rm rank}({\bf A})$, ${\bf U}_r = [{\bf u}_1 \cdots {\bf u}_r]\in \mathbb{R}^{m\times r}$, ${\bf V}_r = [{\bf v}_1 \cdots {\bf v}_r]\in \mathbb{R}^{n\times r}$, and ${\bf S}= {\rm diag}(\sigma_1,\ldots,\sigma_r)$. Similarly, for the matrix ${\bf B}$, we have that  
\begin{equation}\label{Eq:RedSVDB}
{\bf B}= \tilde{\bf U}_{\tilde{r}} \tilde{\bf S}_{\tilde{r}}\tilde{\bf V}_{\tilde{r}}^T,
\end{equation}
where $\tilde{r}={\rm rank}({\bf A})$, $\tilde{\bf U}_{\tilde{r}} = [\tilde{\bf u}_1 \cdots \tilde{\bf u}_{\tilde{r}}]\in \mathbb{R}^{m
\times \tilde{r}}$, $\tilde{\bf V}_{\tilde{r}} = [\tilde{\bf v}_1 \cdots \tilde{\bf v}_r]\in \mathbb{R}
^{n\times \tilde{r}}$, and $\tilde{\bf S}= {\rm diag}(\tilde{\sigma}_1,\ldots,\tilde{\sigma}
_{\tilde{r}})$. A direct computation then reveals that 
\begin{equation}\label{Eq:TraceExpand}
{\rm Tr}\left({\bf A}^T {\bf B}\right) = \sum_{i=1}^r \sum_{j=1}^{\tilde{r}} \sigma_i \tilde{\sigma}_j {\bf u}_i^T  \tilde{\bf u}_j {\bf v}_i^T \tilde{\bf v}_j. 
\end{equation}
By using the weighted H\"older inequality for vectors \cite{cvetkovski2012inequalities}, we obtain for $p\neq 1$ that 
\begin{equation}\label{Eq:WeightedHolderp}
 \sum_{i=1}^r \sum_{j=1}^{\tilde{r}} \sigma_i \tilde{\sigma}_j {\bf u}_i^T \tilde{\bf u}_j {\bf v}_i ^T \tilde{\bf v}_j\leq \left(\sum_{i=1}^r\sigma_i^p \sum_{j=1}^{\tilde{r}} |{\bf u}_i^T  \tilde{\bf u}_j {\bf v}_i^T \tilde{\bf v}_j| \right)^{\frac{1}{p}}\left(\sum_{j=1}^{\tilde{r}}\sigma_j^p \sum_{i=1}^{r} |{\bf u}_i^T  \tilde{\bf u}_j{\bf v}_i^T \tilde{\bf v}_j| \right)^{\frac{1}{q}}
\end{equation}
and for $p=1$ that
\begin{equation}\label{Eq:WeightedHolder1}
 \sum_{i=1}^r \sum_{j=1}^{\tilde{r}} \sigma_i \tilde{\sigma}_j {\bf u}_i^T  \tilde{\bf u}
 _j{\bf v}_i^T \tilde{\bf v}_j\leq  \left(\sum_{i=1}^r \sigma_i \sum_{j=1}^{\tilde{r}}  |{\bf u}_i^T 
 \tilde{\bf u}_j{\bf v}_i^T \tilde{\bf v}_j| \right) \|\tilde{\boldsymbol{\sigma}}\|_{\infty}.
\end{equation}
Finally, by invoking Cauchy-Schwartz and the orthonormality of the matrices ${\bf U}_r,{\bf V}_r, \tilde{\bf U}_{\tilde{r}}, \tilde{\bf V}_{\tilde{r}}$, we deduce for $i=1,\ldots,r$ that 
\begin{equation}\label{Eq:CauchyJ}
\sum_{j=1}^{\tilde{r}} |{\bf u}_i^T \tilde{\bf u}_j {\bf v}_i^T \tilde{\bf v}_j| \leq \left( \sum_{j=1}^{\tilde{r}} ({\bf u}_i^T \tilde{\bf u}_j)^2\right)^{\frac{1}{2}} \left( \sum_{j=1}^{\tilde{r}} ({\bf v}_i^T \tilde{\bf v}_j)^2\right)^{\frac{1}{2}}  \leq \|{\bf u}_i\|_2 \|{\bf v}_i\|_2 = 1,
\end{equation}
For $j=1,\ldots, \tilde{r}$, we deduce  that 
\begin{equation}\label{Eq:CauchyI}
\sum_{i=1}^{r} |{\bf u}_i^T \tilde{\bf u}_j {\bf v}_i^T \tilde{\bf v}_j| \leq \left( \sum_{i=1}^{r} ({\bf u}_i^T \tilde{\bf u}_j)^2\right)^{\frac{1}{2}} \left( \sum_{i=1}^{r} ({\bf v}_i^T \tilde{\bf v}_j)^2\right)^{\frac{1}{2}}  \leq \|\tilde{\bf u}_i\|_2 \|\tilde{\bf v}_i\|_2 = 1.
\end{equation}
The combination of these inequalities completes the proof. 
\end{proof}

\section{Proof of Proposition \ref{Prop:HolderSat}}\label{App:HolderSat}
\begin{proof}
We separate the two cases and analyze each one  independently. 

{\bf Case 1: $1<p<+\infty$}. We prove \eqref{Eq:FormBgenP} and deduce \eqref{Eq:FormAgenP} by symmetry.  Following the proof of Proposition \ref{Prop:HolderSat} and considering the reduced SVD of  the matrices ${\bf A}$ and ${\bf B}$  given in \eqref{Eq:RedSVDA} and \eqref{Eq:RedSVDB}, we immediately see that  the inequalities \eqref{Eq:WeightedHolderp}, \eqref{Eq:CauchyJ}, and \eqref{Eq:CauchyI} should all be saturated. The equality condition of the weighted H\"older implies the existence of a positive constant $\alpha>0$ such that, for all $(i,j)\in \{1,\ldots,r \}\times \{1,\ldots, \tilde{r}\}$, we have one of the following conditions: 
\begin{align}
&{\bf u}_i^T  \tilde{\bf u}_j{\bf v}_i^T \tilde{\bf v}_j = 0, \quad \text{or}\label{Eq:Zero}\\
&{\bf u}_i^T  \tilde{\bf u}_j{\bf v}_i^T \tilde{\bf v}_j > 0 \quad \text{and} \quad\tilde{\sigma}_j^q= \alpha\sigma_i^p  \label{Eq:LinDep}.
\end{align}
Moreover, the saturation of \eqref{Eq:CauchyJ} implies that 
\begin{equation}\label{Eq:UinRangeUtield}
{\bf u}_i \in {\rm Range}(\tilde{\bf U}_{\tilde{r}}), \quad {\bf v}_i \in {\rm Range}(\tilde{\bf V}_{\tilde{r}})  \qquad \forall i=1,\ldots, r 
\end{equation}
and also that  there exists a positive constant $\beta_i>0$ (positivity follows from \eqref{Eq:LinDep} and \eqref{Eq:Zero})  such that 
\begin{equation}\label{Eq:CauchySatI}
{\bf u}_i^T \tilde{\bf u}_j = \beta_i  {\bf v}_i^T \tilde{\bf v}_j , \quad \forall j=1,\ldots, \tilde{r}.
\end{equation}
However, from the normality of ${\bf u}_i$ and \eqref{Eq:UinRangeUtield}, we have that 
\begin{equation}
1= \|{\bf u}_i\|_2^2 = \sum_{j=1}^{\tilde{r}} |{\bf u}_i^T \tilde{\bf u}_j |^2 = \beta_i^2 \sum_{j=1}^{\tilde{r}} |{\bf v}_i^T \tilde{\bf v}_j |^2 = \beta_i^2 \|{\bf v}\|_2^2  = \beta_i^2
\end{equation}
which, together with the positivity of $\beta_i$, leads to the conclusion that  $\beta_i=1$ for $i=1,\ldots,r$. Using this, we rewrite \eqref{Eq:CauchySatI} in matrix form as 
\begin{equation}\label{Eq:UeqV}
{\bf U}^T_r \tilde{\bf U}_{\tilde{r}} = {\bf V}^T_r \tilde{\bf V}_{\tilde{r}}.
\end{equation}

Similarly, the saturation of \eqref{Eq:CauchyI} implies that 
\begin{equation}\label{Eq:UtieldinRangeU}
\tilde{\bf u}_j \in {\rm Range}({\bf U}_r), \quad \tilde{\bf v}_i \in {\rm Range}({\bf V}_r), 
\end{equation}
for all $j=1,\ldots,\tilde{r}$. Putting together \eqref{Eq:UinRangeUtield} and \eqref{Eq:UtieldinRangeU}, we deduce that $r=\tilde{r}$ and
\begin{equation}\label{Eq:RanEq}
{\rm Range}({\bf U}_r) = {\rm Range}(\tilde{\bf U}_{\tilde{r}}), \quad {\rm Range}({\bf V}_r) = {\rm Range}(\tilde{\bf V}_{\tilde{r}}).
\end{equation}
This implies the existence of two orthogonal matrices ${\bf P},{\bf Q}\in\mathbb{R}^{r\times r}$ such that 
\begin{equation}\label{Eq:UtieldRep}
\tilde{\bf U}_{\tilde{r}} = {\bf U}_r {\bf P}, \quad \tilde{\bf V}_{\tilde{r}} = {\bf V}_r {\bf Q}.
\end{equation}
However,  replacing \eqref{Eq:UtieldRep} in \eqref{Eq:UeqV}, we conclude that 
\begin{equation}\label{Eq:PeqQ}
{\bf P} =  {\bf U}^T_r {\bf U}_r {\bf P}={\bf U}^T_r \tilde{\bf U}_{\tilde{r}} ={\bf V}^T_r \tilde{\bf V}_{\tilde{r}}=   {\bf V}^T_r {\bf V}_r {\bf Q} ={\bf Q}.
\end{equation} 
This implies that the matrix ${\bf B}$ can be represented as 
\begin{equation}\label{Eq:Brep}
{\bf B}= {\bf U}_r {\bf P} \tilde{\bf S}_{\tilde{r}} {\bf P}^T {\bf V}^T_r = {\bf U}_r {\bf S}_0 {\bf V}_r^T,
\end{equation}
where ${\bf S}_0 = {\bf P} \tilde{\bf S}_{\tilde{r}} {\bf P}^T$. We now show that ${\bf S}_0$ is a diagonal matrix. Indeed, by denoting the $(i,j)$-th entry of ${\bf P}$ as $p_{i,j}$ such that ${\bf P} = [{\bf p}_1 \cdots {\bf p}_r] = [p_{i,j}]$, we   rewrite  \eqref{Eq:Zero} and \eqref{Eq:LinDep} as 
\begin{align}
&p_{i,j}= 0, \quad \text{or}\label{Eq:Zero2}\\
&p_{i,j}>0 \quad \text{and} \quad \tilde{\sigma}_j^q= \alpha\sigma_i^p   \label{Eq:LinDep2},
\end{align}
for all $(i,j)\in \{1,\ldots,r\}^2$. Moreover, by expanding the $(i,j)$-th entry of the matrix ${\bf S}_0$, we  have that 
\begin{align*}
[{\bf S}_0]_{i,j}  = [{\bf P} \tilde{\bf S}_{\tilde{r}} {\bf P}^T]_{i,j} &= \sum_{k=1}^r p_{i,k} \tilde{\sigma}_k p_{j,k}= \sum_{k=1}^r p_{i,k}  {\sigma}_i^{\frac{p}{q}} \alpha^{\frac{1}{q}} p_{j,k} \\& = {\sigma}_i^{\frac{p}{q}} \alpha^{\frac{1}{q}} {\bf p}_i^T {\bf p}_j  = [{\rm J}_p (\boldsymbol{\sigma})]_i c_{\bf B} \delta[i-j],
\end{align*}
where $\delta[\cdot]$ denotes the Kronecker delta and  $c_{\bf B}=  \alpha^{\frac{1}{q}} >0$ is a positive constant. Finally, we obtain the announced expression in \eqref{Eq:FormBgenP} by replacing the above characterization of ${\bf S}_0$ in \eqref{Eq:Brep}.  

For the converse, we note that, if the matrix $\bf B$ is in the form of \eqref{Eq:FormBgenP}, then we have that 
\begin{align*}
{\rm Tr}\left({\bf A}^T {\bf B}\right) &= {\rm Tr}\left( {\bf U}_r{\rm diag}( \boldsymbol{\sigma}){\bf V}_r^T  \left({\bf U}_r{\rm diag}({\rm J}_p(\boldsymbol{\sigma})){\bf V}_r^T\right)^T \right) \\
&= c_{\bf B} {\rm Tr}\left( {\rm diag}( \boldsymbol{\sigma}){\bf V}_r^T  {\bf V}_r {\rm diag}({\rm J}_p(\boldsymbol{\sigma})) {\bf U}_r^T {\bf U}_r\right) 
\\& = c_{\bf B}  \boldsymbol{\sigma}^T {\rm J}_p(\boldsymbol{\sigma}) 
\\&=  c_{\bf B} \|\boldsymbol{\sigma}\|_p \|{\rm J}_p(\boldsymbol{\sigma})\|_q  = \|{\bf A}\|_{S_p} \|{\bf B}\|_{S_q},
\end{align*}
which shows that the equality is indeed saturated in this case. 

{\bf Case 2: $p=1$}. In this case, the saturation of the weighted H\"older inequality implies that,  for all $(i,j)\in \{1,\ldots, r\}\times \{1,\ldots, \tilde{r}\}$, we have that 
\begin{align}
&{\bf u}_i^T  \tilde{\bf u}_j{\bf v}_i^T \tilde{\bf v}_j = 0, \quad \text{or}\label{Eq:Zero1}\\
&{\bf u}_i^T  \tilde{\bf u}_j{\bf v}_i^T \tilde{\bf v}_j > 0 \quad \text{and} \quad\tilde{\sigma}_j=\tilde{\sigma}_1\label{Eq:LinDep1}.
\end{align}
For  equality, we also need to have the saturation of \eqref{Eq:CauchyJ}, which we showed to be equivalent to \eqref{Eq:UinRangeUtield} and \eqref{Eq:UeqV}. From \eqref{Eq:UinRangeUtield}, we deduce   the existence of matrices ${\bf P}_1, {\bf P}_2\in\mathbb{R}^{\tilde{r}\times r}$ such that 
\begin{equation}\label{Eq:UUtieldP}
{\bf U}_r = {\bf \tilde{U}}_{\tilde{r}} {\bf P}_1, \qquad {\bf V}_r = {\bf \tilde{V}}_{\tilde{r}} {\bf P}_2.
\end{equation}
The replacement of these in \eqref{Eq:UeqV} implies that 
\begin{equation}
{\bf P}_1^T = {\bf P}_1^T  {\bf \tilde{U}}^T_{\tilde{r}} {\bf \tilde{U}}_{\tilde{r}} = {\bf {U}}^T_r {\bf \tilde{U}}_{\tilde{r}} ={\bf {V}}^T_r {\bf \tilde{V}}_{\tilde{r}} = {\bf P}_2^T  {\bf \tilde{V}}^T_{\tilde{r}} {\bf \tilde{V}}_{\tilde{r}} = {\bf P}_2^T,
\end{equation}
and, hence, that ${\bf P}_1={\bf P}_2=  [p_{i,j}]\in\mathbb{R}^{\tilde{r}\times r}$. Now, one can rewrite the conditions \eqref{Eq:Zero1} and \eqref{Eq:LinDep1} and deduce that, for any $j=1,\ldots, \tilde{r}$, we have that 
\begin{align}\label{Eq:LinDep1P}
&p_{j,i}= 0, \quad \forall i=1,\ldots,r \quad \text{or} \quad \tilde{\sigma}_j=\tilde{\sigma}_1. 
\end{align}
From Conditions \eqref{Eq:LinDep1P} and following the definition of $r_1$ (the multiplicity of the largest singular value), we deduce that 
\begin{equation}
{\bf P}_1 = \begin{bmatrix}
{\bf P} \\ \boldsymbol{0}_{r_{\rm res} \times r}
\end{bmatrix}, 
\end{equation}
where ${\bf P}\in \mathbb{R}^{r_1 \times r}$ and  $r_{\rm res} = (\tilde{r} -r_1)$.  Using this form and the definition of   ${\bf \tilde{U}}_1$ and ${\bf \tilde{V}}_1$ (given in the statement of the proposition), we  rewrite \eqref{Eq:UUtieldP} as 
\begin{equation}\label{Eq:UUtieldP2}
{\bf U}_r = {\bf \tilde{U}}_{1} {\bf P}, \qquad {\bf V}_r = {\bf \tilde{V}}_{1} {\bf P}.
\end{equation}
Therefore, 
\begin{equation} 
{\bf I}_r = {\bf U}_r^T {\bf U}_r = {\bf P}^T {\bf \tilde{U}}_{1}^T {\bf \tilde{U}}_{1} {\bf P}=  {\bf P}^T {\bf P}.
\end{equation}
Hence, ${\bf P}$ is a sub-orthogonal matrix and 
$${\rm rank}({\bf B}) = \tilde{r} \geq r_1 \geq {\rm rank}({\bf P}) \geq r={\rm rank}({\bf A}).$$
The replacement of \eqref{Eq:UUtieldP2} in the reduced SVD of ${\bf A}$ yields the announced expression with ${\bf X}= {\bf P} {\bf S} {\bf P}^T$.

 Based on the definitions of $r_1$, ${\bf \tilde{U}}_1$, and ${\bf \tilde{V}}_1$, we note that one can rewrite the reduced SVD of ${\bf B}$ as 
\begin{equation}\label{Eq:RedSVDB2}
{\bf B} =  \tilde{\sigma}_1 {\bf \tilde{U}}_1 {\bf \tilde{V}}_1^T  + {\bf \tilde{U}}_{\rm res} {\bf \tilde{S}}_{\rm res} {\bf \tilde{V}}_{\rm res}^T, 
\end{equation}
where ${\bf \tilde{U}}_{\rm res}\in\mathbb{R}^{m\times r_{\rm res}}$,  ${\bf \tilde{S}}_{\rm res}\in\mathbb{R}^{r_{\rm res}\times r_{\rm res}}$, and ${\bf \tilde{V}}_{\rm res}\in \mathbb{R}^{n\times r_{\rm res}}$ are the remaining singular values and vectors  such that  
$${\bf \tilde{U}} = [{\bf \tilde{U}}_1 \quad {\bf \tilde{U}}_{\rm res}], \quad  {\bf \tilde{V}} = [{\bf \tilde{V}}_1 \quad {\bf \tilde{V}}_{\rm res}], \quad  {\bf \tilde{S}}= \begin{bmatrix} \tilde{\sigma}_1 {\bf I}_{r_1} & \boldsymbol{0} \\ \boldsymbol{0}  & \tilde{S}_{\rm res}\end{bmatrix}.$$ 
Now, if ${\bf A}$ admits the form \eqref{Eq:FormA1} and if we consider the SVD of ${\bf X}={\bf P} {\bf S}{\bf P}^T$ (the assumption that ${\bf X}$ is symmetric ensures that is has an orthogonal eigen-decomposition), then 
\begin{align*}
{\rm Tr}\left({\bf A}^T {\bf B}\right) &=  {\rm Tr}\left( {\bf \tilde{V}}_1 {\bf P} {\bf S} {\bf P}^T {\bf \tilde{U}}_1 ^T \left(  \tilde{\sigma}_1 {\bf \tilde{U}}_1 {\bf \tilde{V}}_1^T  + {\bf \tilde{U}}_{\rm res} {\bf \tilde{S}}_{\rm res} {\bf \tilde{V}}_{\rm res}^T\right) \right) \\
& = \tilde{\sigma}_1 {\rm Tr}\left( {\bf \tilde{V}}_1 {\bf P} {\bf S} {\bf P}^T {\bf \tilde{U}}_1 ^T{\bf \tilde{U}}_1 {\bf \tilde{V}}_1^T\right) + {\rm Tr}\left( {\bf \tilde{V}}_1 {\bf P} {\bf S} {\bf P}^T {\bf \tilde{U}}_1 ^T {\bf \tilde{U}}_{\rm res} {\bf \tilde{S}}_{\rm res} {\bf \tilde{V}}_{\rm res}^T\right)
\\&= \tilde{\sigma}_1 {\rm Tr}\left( {\bf \tilde{V}}_1 {\bf P} {\bf S} {\bf P}^T {\bf I}_{r_1} {\bf \tilde{V}}_1^T\right)+ {\rm Tr}\left( {\bf \tilde{V}}_1 {\bf P} {\bf S} {\bf P}^T \boldsymbol{0}_{r_1\times r_{\rm res}} {\bf \tilde{S}}_{\rm res} {\bf \tilde{V}}_{\rm res}^T\right)
\\& = \tilde{\sigma}_1 {\rm Tr}\left( {\bf S} {\bf P}^T  {\bf \tilde{V}}_1^T{\bf \tilde{V}}_1 {\bf P} \right)+ 0 
\\& = \tilde{\sigma}_1 {\rm Tr}\left( {\bf S} {\bf P}^T   {\bf P} \right)
\\& = \tilde{\sigma}_1 {\rm Tr}\left( {\bf S}\right) = \|{\bf B}\|_{S_\infty} \|{\bf A}\|_{S_1},
\end{align*}
which establishes the  sufficiency in this case.

Finally, assuming that $r=r_1= \tilde{r}$, we deduce that ${\bf P}\in\mathbb{R}^{r\times r}$ is an orthogonal matrix and, hence, that  ${\bf P}^{-1}= {\bf P}^T$. Now, using \eqref{Eq:UUtieldP2} and the rank assumption, we can simplify the expansion \eqref{Eq:RedSVDB2} as 
\begin{equation} 
{\bf B} =  \tilde{\sigma}_1 {\bf \tilde{U}}_1 {\bf \tilde{V}}_1^T  = \tilde{\sigma}_1 {\bf U}_r {\bf P}^T  \left({\bf V}_r {\bf P}^T\right)^T =  \tilde{\sigma}_1 {\bf U}_r {\bf P}^T  {\bf P} {\bf V}_r^T =  \tilde{\sigma}_1 {\bf U}_r  {\bf V}_r^T.
\end{equation}
 \end{proof}
 \section{Proof of Theorem \ref{Thm:p}}\label{App:p}
 \begin{proof} {\bf Case I: $1<p<+\infty$.} Assume that $({\bf A},{\bf B})$ forms an $(S_p,S_q)$-conjugate pair. Hence, we  have that  $\langle {\bf A}, {\bf B}\rangle = \|{\bf A}\|_{S_p} \|{\bf B}\|_{S_q}$ which, together with  Proposition \ref{Prop:HolderSat},  implies  that ${\bf B}$   admits the form 
$$ {\bf B}= \frac{\|{\bf B}\|_{S_q}}{\|{\bf A}\|_{S_p}} {\bf U}_r{\rm diag}({\rm J}_p(\boldsymbol{\sigma})){\bf V}_r^T={\bf U}_r{\rm diag}({\rm J}_p(\boldsymbol{\sigma})){\bf V}_r^T.$$

{\bf Case II: $p=1$.} Similarly to the previous case, consider ${\bf A} \in \mathbb{R}^{m\times n}$ and ${\bf B}\in\mathcal{J}_{S_1,{\rm rank}}({\bf A})$. We have that 
\begin{align}
&{\rm Tr}\left({\bf A}^T{\bf B}\right) = \|{\bf A}\|_{S_1}\|{\bf B}\|_{S_{\infty}} \label{Eq:C1} \\
&\|{\bf A}\|_{S_1} = \|{\bf B}\|_{S_{\infty}} \label{Eq:C2} \\
& {\rm rank}({\bf B}) \leq {\rm rank}({\bf C}), \quad \forall {\bf C} \in \mathcal{J}_{S_1}({\bf A}). \label{Eq:C3}
\end{align} 
From \eqref{Eq:C1} and using Proposition \ref{Prop:HolderSat}, we deduce that ${\rm rank}({\bf B})\geq {\rm rank}({\bf A})$ which, together with \eqref{Eq:C3}, implies that ${\bf B}$ should be equal to 
$$ {\bf B} = \|{\bf B}\|_{S_{\infty}} {\bf U}_r {\bf V}_r^T= \|\boldsymbol{\sigma}\|_1 {\bf U}_r {\bf V}_r^T,$$
where the last equality is obtained using \eqref{Eq:C2}.

{\bf Case III: $p=+\infty$.} Following Proposition \ref{Prop:HolderSat},  any matrix ${\bf B} \in \mathcal{J}_{S_\infty}({\bf A})$ can be expressed as 
$${\bf B} = {\bf U}_1 {\bf \tilde{X}} {\bf V}_1^T, $$
where ${\bf \tilde{X}}\in\mathbb{R}^{r_1\times r_1}$ is a symmetric matrix. By defining ${\bf X} = \sigma_1^{-1} {\bf \tilde{X}}$, one readily verifies that  ${\bf B} =\sigma_1 {\bf U}_1 {\bf {X}} {\bf V}_1^T $. By recalling the normalization constraint $\|{\bf A}\|_{S_\infty} = \|{\bf B}\|_{S_1}$, we therefore obtain that 
$$ \sigma_1 = \|{\bf A}\|_{S_\infty} = \|{\bf B}\|_{S_1} = \sigma_1 \|{\bf X}\|_{S_1},$$ 
which implies that $\|{\bf X}\|_{S_1}=1$. To show that $\mathcal{J}_{S_\infty}({\bf A})$ is convex, consider two symmetric matrices ${\bf X}_0$ and ${\bf X}_1$ in the unit ball of Schatten-1 norm and define 
$$ {\bf B}_{\alpha} = \sigma_1 {\bf U}_1 {\bf {X}}_\alpha {\bf V}_1^T, \quad {\bf X}_{\alpha} = \alpha {\bf X}_0 + (1-\alpha) {\bf X}_1 $$
 for $\alpha \in [0,1]$. On one hand, from the linearity of traces, we have  that
$$ {\rm Tr}({\bf A}^T{\bf B}_{\alpha} ) = {\rm Tr}\left({\bf A}^T \left(\alpha {\bf B}_1 + (1-\alpha){\bf B}_0\right) \right) = \alpha {\rm Tr}({\bf A}^T {\bf B}_1) + (1-\alpha){\rm Tr}({\bf A}^T {\bf B}_0).$$
On the other hand, from the definition of ${\bf X}_0$ and ${\bf X}_1$, we deduce that ${\bf B}_0,{\bf B}_1 \in \mathcal{J}_{S_\infty}({\bf A})$. Hence, 
$$ {\rm Tr}({\bf A}^T{\bf B}_{\alpha} ) =  \alpha \|{\bf A}\|_{S_\infty}^2 + (1-\alpha)\|{\bf A}\|_{S_\infty}^2  = \|{\bf A}\|_{S_\infty}^2.$$
However,  from the H\"older inequality and the convexity of norms, we have that 
$$ {\rm Tr}({\bf A}^T{\bf B}_{\alpha} ) \leq \|{\bf A}\|_{S_\infty} \|{\bf B}_{\alpha}\|_{S_1} \leq  \|{\bf A}\|_{S_\infty} \left(\alpha \|{\bf B}_1\|_{S_1} + (1- \alpha ) \|{\bf B}_0\|_{S_1}\right) = \|{\bf A}\|_{S_{\infty}}.$$
This implies that the H\"older inequality is saturated and also that  $\|{\bf B}_{\alpha}\|_{S_1} = \|{\bf A}\|_{S_\infty}$ which, altogether, implies that ${\bf B}_{\alpha} \in \mathcal{J}_{S_\infty}({\bf A})$ for all $\alpha\in [0,1]$. 

It is clear that $ \mathcal{J}_{S_\infty}({\bf A})$ is  the convex hull of ${\bf E}_{i,j}$'s for $1\leq i\leq j\leq r_1$ and that the ${\bf E}_{i,j}$ are linearly independent. This proves that  ${\bf E}_{i,j}$s are indeed the extreme points of $  \mathcal{J}_{S_\infty}({\bf A})$. 

Finally, we observe that the set $ \mathcal{J}_{S_\infty}({\bf A})$ contains all matrices of the  form ${\bf B} = {\bf U}_1 {\bf p}{\bf p}^T{\bf V}_1^T$ for any vector ${\bf p}\in\mathbb{R}^{r_1}$ with $\|{\bf p}\|_2=1$. These are indeed  all the rank-1 elements of $ \mathcal{J}_{S_\infty}({\bf A})$ which, due to the Definition \ref{Def:SparseDual}, forms the set of sparse dual conjugates. 
\end{proof}

\bibliography{Aziznejad.bib}

\end{document}